\newtheorem{definition}{Definition}[section]
\newtheorem{lemma}{Lemma}
\newtheorem{theorem}{Theorem}
\newtheorem{remark}{Remark}
\begin{document}

% Заголовок по центру как в Word
\title{A modified Brinkman penalization fictitious domain method for the unsteady Navier-Stokes equations}

\author{
	\and Zhanybek Baitulenov\thanks{Faculty of Mechanics and Mathematics, Al-Farabi Kazakh National University, Almaty 050040, Kazakhstan, janibekbb47@gmail.com.} \and Maxim Olshanskii \thanks{Department of Mathematics, University of Houston, 651 PGH, Houston, 77204, TX, USA, 
		maolshanskiy@uh.edu. } \and Almas Temirbekov\thanks{Faculty of Mechanics and Mathematics, Al-Farabi Kazakh National University, Almaty 050040, Kazakhstan, almastemir86@gmail.com.} \and Nurlan Temirbekov\thanks{Faculty of Mechanics and Mathematics, Al-Farabi Kazakh National University, Almaty 050040, Kazakhstan, ntemirbekov314@gmail.com.} \and Syrym Kasenov\thanks{Faculty of Mechanics and Mathematics, Al-Farabi Kazakh National University, Almaty 050040, Kazakhstan, syrym.kasenov@gmail.com.} 
}

\maketitle

\begin{abstract} This paper investigates a modification of the fictitious domain method with continuation in the lower-order coefficients for the unsteady Navier-Stokes equations governing the motion of an incompressible homogeneous fluid in a bounded 2D or 3D domain. The modification enables {a solution-dependent} choice of the critical parameter. Global-in-time existence and convergence of a weak solution to the auxiliary problem are proved, and local-in-time existence and convergence of a unique strong solution are established. For the strong solution, a new higher-order convergence rate estimate in the penalization parameter is obtained. The introduced framework  allows us to apply a pointwise divergence free finite element method as a discretization technique, leading to strongly mass conservative discrete  fictitious domain method. A numerical example illustrates the performance of the method.
\end{abstract}

\section{Introduction}

The numerical solution of boundary value problems in complex domains remains a central challenge in scientific computing. Several approaches have been developed over the past decades to address geometric complexity, including curvilinear grid methods and fictitious domain formulations. While curvilinear grids require intricate mesh generation and coordinate transformations, geomtrically unfitted methods such as the fictitious domain method (FDM), immersed boundary and interface methods, as well as extended, cut and trace finite element techniques, have become increasingly popular; see, e.g., \cite{Glowinski1994,peskin2002immersed,leveque1994immersed,belytschko1999elastic,burman2025cut,olshanskii2018trace,Lehrenfeld2016}.

The fictitious domain method reformulates the original problem posed on a complex physical domain into one defined on a simple background domain that fully embeds  it. This reformulation permits the use of structured meshes and efficient solvers. The approach emerged in 1980's and was extensively developed by Glowinski and coauthors in a series of  papers \cite{Glowinski1994,Glowinski1995,GlowinskiPan2000,Glowinski2001,Girault2001}. In parallel, related operator-splitting and decomposition strategies were advanced by Marchuk and collaborators \cite{Marchuk1986,agoshkov1988poincare}. These works provided the theoretical and algorithmic foundations for modern fictitious domain formulations.

Since then, the method has been widely applied to incompressible fluid dynamics. For instance, Smagulov et al.~\cite{Smagulov2000} proposed a fictitious domain formulation for the Navier--Stokes equations with an efficient pressure treatment based on a Poisson problem. The method has been applied to simulate particulate flows \cite{Wu2020}, non-Newtonian fluids \cite{He2017}, two-phase and interface Stokes flows\cite{Court2019,corti2024low}, and fluid--structure interaction problems involving moving or deformable bodies \cite{boffi2017fictitious,Wang2017,boilevin2019loosely,Wang2019}. Recent developments include penalty-based stabilization techniques \cite{Zhou2018}, distributed Lagrange multiplier approaches for interface problems \cite{WangSun2017}, and phase-field formulations for particulate flows in evolving geometries \cite{Reder2021}. A review of a recently introduced unfitted finite element fictitious domain method for fluid problems based on a distributed Lagrange multiplier  can be found in \cite{alshehri2025advances}.
The theoretical foundation of fictitious domain methods involves establishing well-posedness of the extended problem and deriving convergence estimates for the approximate solution restricted to the physical domain. 

In this paper we develop and analyze a modification of the  fictitious domain method with continuation in the lower-order coefficients (also known as \textit{Brinkman penalization}) for the unsteady incompressible Navier--Stokes equations in three dimensions. We establish the global-in-time existence and convergence of weak solutions to the auxiliary problem, and the local-in-time existence and convergence of unique strong solutions. For the strong solution, we derive a new convergence rate estimate that compares favorably with existing results for the standard FDM.

We further consider an unfitted finite element method that employs a background mesh defined on an ambient domain $D$, which is independent of the geometry of the physical domain $\Omega$.
{The resulting formulation is particularly well suited for the use of strongly divergence-free finite elements, the incorporation of which in more elaborate unfitted methods, such as CutFEM, entails substantially higher implementation complexity~\cite{neilan2025unfitted}}.
In our implementation, the mesh is barycentrically refined to enable the use of an inf–sup stable, divergence-free Scott–Vogelius finite element method. This leads to a discrete fictitious domain method that is strongly mass conservative. A series of numerical experiments with a synthetic solution illustrate the theory and assess the performance of the method.

The remainder of the paper is organized as follows. In Section~\ref{S2}, we formulate the problem and introduce the fictitious domain method. The global existence of a weak solution and the local existence and uniqueness of a strong solution to the fictitious domain problem are established in Section~\ref{S3}. The convergence estimate in terms of the method's critical parameter is presented in Section~\ref{S5}. Section~\ref{S6} introduces the strongly conservative finite element fictitious domain method and presents results from several numerical experiments.

\section{Problem Statement}\label{S2}

We are interested in the Navier--Stokes system describing the motion of a homogeneous, unsteady, incompressible viscous fluid. Thus, in a bounded domain 
\[
Q_T = (0,T)\times\Omega, \qquad \Omega \subset \mathbb{R}^3, \quad S = \partial\Omega,
\] 
we consider the nonlinear unsteady Navier--Stokes equations for an incompressible homogeneous fluid:
\begin{equation} \label{St 1}
	\frac{\partial v}{\partial t} + (v\cdot\nabla)v = \mu\Delta v - \nabla P + f,
\end{equation}
\begin{equation} \label{St 2}
	\operatorname{div} v=0,
\end{equation}
with initial and boundary conditions
\begin{equation} \label{St 3}
	v|_{t=0} = v_0(x),\qquad v|_{S} = 0.
\end{equation}
Here $v$ is the fluid velocity vector field, $P$ the pressure, $f$ the body force vector field, and $\mu>0$ the viscosity coefficient. 

In \cite{angot1999penalization,carbou2003boundary}, the so-called $L^2$-penalization or Brinkman penalization version of the fictitious domain method was studied for \eqref{St 1}–\eqref{St 3}; {see also \cite{angot1999analysis,kevlahan2001computation,sharaborin2021coupled,kevlahan2005adaptive,liu2007brinkman} among other contributions}. The approach has recently gained popularity in fluid topology optimization and related applications; see, e.g., \cite{abdelhamid2023calculation,boscolo2025scaling} and references therein. 

Let $D \supset \Omega$ be a domain with boundary $S_1 = \partial D$. The $L^2$-penalization method can be formulated as the following auxiliary problem:
\begin{equation}\label{St 4}
	\frac{\partial v^\varepsilon}{\partial t} + (v^\varepsilon \cdot \nabla)v^\varepsilon = \mu\Delta v^\varepsilon - \nabla P^\varepsilon - \frac{\xi(x)v^\varepsilon}{\varepsilon} + f^\varepsilon,
\end{equation}
\begin{equation} \label{St 5}
	\operatorname{div} v^\varepsilon = 0,
\end{equation}
\begin{equation}\label{St 6}
	v^\varepsilon|_{t=0} = v_0(x),\qquad v^\varepsilon|_{S_1} = 0,
\end{equation}
where $\varepsilon > 0$ is a small parameter, 
\[
\xi(x) = \begin{cases} 
	0, & x \in \Omega, \\[4pt]
	1, & x \in D_1 = D \setminus \Omega, 
\end{cases}
\qquad 
f^{\varepsilon} = \begin{cases} 
	f, & x \in \Omega, \\[4pt]
	0, & x \in D_1,
\end{cases}
\]
and $v_0(x)$ is extended as zero outside $\Omega$. An error estimate for this method was established in \cite{angot1999penalization,carbou2003boundary} in terms of powers of $\varepsilon$:
\begin{equation}\label{St 7}
	\| v^\varepsilon - v \|_{L_2(0,T;L_2(\Omega))} \leq C\sqrt{\varepsilon},
\end{equation}
where the constant $C$ depends on the solution $v$ of \eqref{St 1}--\eqref{St 3}. {Some authors (see, e.g., \cite{kevlahan2005adaptive,sharaborin2021coupled}) have reported that the convergence rates observed in practice with respect to $\varepsilon$ can be higher.} 

In this work we propose a modification of the fictitious domain method, in which the critical ``extension'' function $\xi(x)/\varepsilon$ is replaced by a solution-dependent factor. The modified method includes \eqref{St 4}--\eqref{St 6} as a special case. We prove a convergence estimate for the modified method, which is new and  generalizes \eqref{St 7}. Furthermore, it improves \eqref{St 7} also for the case $\beta=0$.

Specifically, we consider the problem
\begin{equation} \label{St 8}
	\frac{\partial v^\varepsilon}{\partial t} + (v^\varepsilon \cdot \nabla)v^\varepsilon = \mu\Delta v^\varepsilon - \nabla P^\varepsilon - \frac{\xi(x)v^\varepsilon}{\varepsilon\|v^\varepsilon\|_{L_2(D_1)}^{\beta}} + f^{\varepsilon},
\end{equation}
\begin{equation} \label{St 9}
	\operatorname{div} v^\varepsilon = 0, \qquad 0 \le \beta < 1,
\end{equation}
\begin{equation} \label{St 10}
	v^\varepsilon|_{t=0} = v_0(x),\qquad v^\varepsilon|_{S_1} = 0.
\end{equation}
Clearly, for $\beta = 0$ the problem \eqref{St 8}--\eqref{St 10} reduces to \eqref{St 4}--\eqref{St 6}. 

\begin{remark}{
	If the problem \eqref{St 1}--\eqref{St 3} is supplied with a non-homogeneous Dirichlet boundary condition  $v=v_b$ on $S$, then the fictitious domain method is modified to account for it as follows. Consider arbitrary smooth $v_1:\,D\to \mathbb{R}^d$ such that $(v_1)|_{S}=v_b$ and update the penalty term in the momentum equation \eqref{St 8} to $\frac{\xi(x)(v^\varepsilon-v_1)}{\varepsilon\|v^\varepsilon-v_1\|_{L_2(D_1)}^{\beta}}$.}
\end{remark}

In this paper we establish the global existence of a weak solution to \eqref{St 8}--\eqref{St 10}. We also prove local existence of a unique strong solution and derive an estimate for its convergence to the solution of \eqref{St 1}--\eqref{St 3}.

We define $V(D)$ as the closure in the $W_{2}^{1}(D)$-norm of the set of infinitely differentiable, solenoidal, compactly supported vector functions in $D$. 

\begin{definition}
	A weak solution of the auxiliary problem \eqref{St 8}--\eqref{St 10} is a function $v^\varepsilon \in L_2(0,T;V(D)) \cap L_\infty(0,T;L_2(D))$ such that
	\begin{multline*}
	\int_0^T \int_D \left( -v^\varepsilon \Phi_t + \mu \nabla v^\varepsilon: \nabla\Phi - (v^\varepsilon \cdot \nabla)\Phi \cdot v^\varepsilon - f^\varepsilon \Phi \right) \, dx dt\\
	+ \int_0^T \int_{D_1} \frac{v^\varepsilon \cdot \Phi}{\varepsilon \|v^\varepsilon\|_{L_2(D_1)}^\beta} \, dx dt - \int_D v_0(x)\Phi(0,x)\,dx = 0,
	\end{multline*}
	for all  test functions $\Phi \in L_2(0,T;V(D))$ with $\Phi_t \in L_2(0,T;L_2(D))$ and $\Phi(T,\cdot)=0$.
\end{definition}

\begin{definition}
	A strong solution of the auxiliary problem \eqref{St 8}--\eqref{St 10} is a pair of functions $(v^\varepsilon, P^\varepsilon)$ that satisfy equations \eqref{St 8}--\eqref{St 9} and boundary conditions \eqref{St 10} almost everywhere.
\end{definition}

The  weak and strong solutions of the original problem \eqref{St 1}--\eqref{St 3} are defined similarly.

Next, we recall a few well-known definitions and results (see, e.g., {section~II.2 of} \cite{antontsev1989boundary}), which will be used later.

\begin{lemma}\label{L1}
	A bounded subset of a reflexive Banach space is weakly compact. Moreover, if \( u \) is the weak limit of a sequence \( \{u_n\} \), then
	\[
	\|u\| \leq \lim\limits_{n \to \infty} \|u_n\|.
	\]
\end{lemma}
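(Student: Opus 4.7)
The plan is to deduce both assertions from three classical ingredients: the Banach–Alaoglu theorem, the Hahn–Banach extension theorem, and (for the sequential formulation that will be used in the applications later in the paper) the Eberlein–\v Smulian theorem. Since the statement collects two standard facts, there is no deep obstacle; the work is essentially to organize the citations and to execute a short Hahn–Banach argument for the lower-semicontinuity part.

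For the compactness assertion, I would argue as follows. Let $X$ be a reflexive Banach space and let $J:X\to X^{\ast\ast}$ denote the canonical isometric embedding, which is surjective by reflexivity. Given a bounded set $B\subset X$, its image $J(B)$ is bounded in $X^{\ast\ast}$ and therefore contained in some closed ball $B_R^{\ast\ast}$. By the Banach–Alaoglu theorem, $B_R^{\ast\ast}$ is compact in the weak-$\ast$ topology of $X^{\ast\ast}$. Because $J$ is a homeomorphism between $(X,\text{weak})$ and $(X^{\ast\ast},\text{weak-}\ast)$ when $X$ is reflexive (both topologies are generated by the same family of functionals $X^\ast$), the preimage $B$ is relatively weakly compact in $X$. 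Passing from topological to sequential compactness is the one place one must be careful: the Eberlein–\v Smulian theorem guarantees that, in any Banach space, relative weak compactness and relative weak sequential compactness coincide, which yields the form of the statement needed in the sequel, namely that every bounded sequence admits a weakly convergent subsequence.

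For the lower-semicontinuity assertion, suppose $u_n \rightharpoonup u$ in $X$. By the Hahn–Banach theorem, there exists $\ell\in X^\ast$ with $\|\ell\|_{X^\ast}=1$ and $\ell(u)=\|u\|$. Weak convergence gives $\ell(u_n)\to \ell(u)$, while $|\ell(u_n)|\le \|\ell\|_{X^\ast}\|u_n\|=\|u_n\|$. Combining these,
\[
\|u\|=\ell(u)=\lim_{n\to\infty}\ell(u_n)=\liminf_{n\to\infty}\ell(u_n)\le \liminf_{n\to\infty}\|u_n\|,
\]
which is the inequality stated (with $\liminf$ in place of $\lim$; when the limit on the right exists, the two coincide).

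The main potential pitfall is the subtle distinction between weak compactness and weak sequential compactness, and whether the canonical embedding truly transports the weak-$\ast$ topology on $X^{\ast\ast}$ onto the weak topology on $X$ under reflexivity; both issues are handled by invoking Eberlein–\v Smulian and the defining property of reflexive spaces, respectively. Neither step requires any ingredient not already present in the cited reference, so the proof is really an organizational task and can be kept to a few lines.
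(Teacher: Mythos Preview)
Your argument is correct and entirely standard. Note, however, that the paper does not actually prove this lemma: it is merely recalled as a well-known fact with a reference to \cite[section~II.2]{antontsev1989boundary}, so there is no ``paper's own proof'' to compare against. Your Banach--Alaoglu / Hahn--Banach / Eberlein--\v{S}mulian route is exactly the textbook justification one would expect behind such a citation.
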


\begin{lemma}
	A closed bounded ball in the space $L_\infty(\Omega)$ is compact in the sense of weak-* convergence.
\end{lemma}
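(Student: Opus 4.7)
The plan is to identify this statement as an instance of the Banach--Alaoglu theorem combined with the duality $L_\infty(\Omega) = (L_1(\Omega))^*$.

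First, I would invoke the Riesz representation theorem for Lebesgue spaces: the bilinear pairing
\[
\langle u,\varphi\rangle = \int_\Omega u(x)\varphi(x)\,dx, \qquad u\in L_\infty(\Omega),\ \varphi\in L_1(\Omega),
\]
realizes $L_\infty(\Omega)$ isometrically as the topological dual of $L_1(\Omega)$. The weak-$*$ topology on $L_\infty(\Omega)$ induced through this identification is precisely the notion of weak-$*$ convergence appearing in the lemma, namely $u_n\rightharpoonup^{*} u$ iff $\int_\Omega u_n\varphi\,dx\to\int_\Omega u\varphi\,dx$ for every $\varphi\in L_1(\Omega)$.

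Next, I would apply the Banach--Alaoglu theorem to $X=L_1(\Omega)$: the closed unit ball of $X^{*}=L_\infty(\Omega)$ is compact in the weak-$*$ topology. The standard proof embeds this ball into the product $\prod_{\varphi\in X}[-\|\varphi\|_{L_1},\|\varphi\|_{L_1}]$, which is compact by Tychonoff's theorem, and identifies it as a closed subset whose relative product topology coincides with the weak-$*$ topology; compactness thus transfers. A general closed ball of radius $R$ is an affine image of the unit ball, so the conclusion follows by scaling.

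The main subtlety, and the only place one has to be careful, is that the lemma will be used in the paper to extract weak-$*$ convergent subsequences from bounded sequences, whereas topological compactness does not automatically yield sequential compactness in infinite dimensions. This is not an obstacle here: for the bounded domains $\Omega\subset\mathbb{R}^3$ considered in the paper, the predual $L_1(\Omega)$ is separable, which makes the weak-$*$ topology on any bounded subset of $L_\infty(\Omega)$ metrizable. Hence weak-$*$ compactness coincides with sequential weak-$*$ compactness, so from every bounded sequence in $L_\infty(\Omega)$ one can extract a weak-$*$ convergent subsequence, which is the form in which the lemma will actually be applied later in the analysis.
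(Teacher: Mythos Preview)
Your proof is correct and is the standard argument via the duality $L_\infty(\Omega)=(L_1(\Omega))^*$ and Banach--Alaoglu, together with the separability of $L_1(\Omega)$ to upgrade to sequential compactness. The paper does not actually prove this lemma: it is listed among ``well-known definitions and results'' with a reference to \cite{antontsev1989boundary}, so there is nothing to compare and your argument simply supplies the omitted details.
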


\begin{lemma}\label{L3}
	Let $\Omega\subset\mathbb{R}^n$ be compact. A subset \( \mathcal{K}\subset C(\Omega) \) is compact if and only if it is closed, uniformly bounded, and equicontinuous, i.e.,
	\[
	\exists k_0 > 0 : \|u\|_{C(\Omega)} \leq k_0, \quad \forall u \in \mathcal{K};
	\]
	\[
	\forall \varepsilon > 0 \ \exists \delta = \delta(\varepsilon) > 0 : \forall x_1, x_2 \in \overline{\Omega},\ |x_1 - x_2| < \delta \ \Rightarrow \ |u(x_1) - u(x_2)| < \varepsilon \quad \forall u \in \mathcal{K}.
	\]
	
	\noindent
	Assume now that $\Omega\subset\mathbb{R}^n$ is bounded. For the compactness of a closed set \( K \) in the space \( L_p(\Omega) \), \( 1 < p < \infty \), it is necessary and sufficient that all functions in \( K \) are uniformly bounded and equicontinuous in the \( L_p(\Omega) \) norm, i.e.,
	\[
	\exists k_1 > 0 : \|u\|_{L_p(\Omega)} \leq k_1,\quad \forall u \in K;
	\]
	\[
	\forall \varepsilon > 0\ \exists \delta = \delta(\varepsilon) > 0 : \forall \Delta,\ |\Delta| < \delta,\ \forall u \in K \ \Rightarrow \ \| u_\Delta - u \|_{L_p(\Omega)} < \varepsilon,
	\]
	where \( u_\Delta(x) = u(x + \Delta) \), and \( u_\Delta(x) \equiv 0 \) when \( x + \Delta \notin \Omega \).
	
	In the space $C^\alpha(\Omega)$, $ 0 \leq \alpha < 1$, a closed set of functions is compact if the functions are uniformly bounded in the $C^\beta(\Omega)$-norm for some $\beta > \alpha$.
\end{lemma}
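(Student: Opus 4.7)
The lemma bundles three classical compactness criteria (Arzelà–Ascoli, Fréchet–Kolmogorov, and the compact Hölder embedding), so my plan is to treat them separately and rely on a common technique: reduce each case to the $C(\overline\Omega)$ statement via a regularization and then apply a diagonal subsequence argument.

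\emph{Step 1: the $C(\overline\Omega)$ criterion.} Necessity is routine: a compact set in a metric space is bounded, and uniform continuity of each $u\in\mathcal K$ together with uniform convergence of a finite $\varepsilon$-net forces uniform equicontinuity. For sufficiency I would pick a countable dense set $\{x_k\}\subset\overline\Omega$, use the uniform bound to extract, by a Cantor diagonal argument, a subsequence $\{u_{n_j}\}$ converging at every $x_k$; uniform equicontinuity then upgrades this pointwise convergence on a dense set to uniform convergence on $\overline\Omega$ by the standard $3\varepsilon$-argument.

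\emph{Step 2: the $L_p(\Omega)$ criterion.} Necessity again follows by taking a finite $\varepsilon$-net and using the obvious fact that $\|u_\Delta - u\|_{L_p}\to 0$ for any single $u\in L_p$. For sufficiency the key idea is mollification: fix a standard mollifier $\eta_\rho$ and set $u_\rho:=u*\eta_\rho$ (after a zero extension of $u$). Using Hölder and the hypothesis on $L_p$-equicontinuity, I can show that $\sup_{u\in K}\|u_\rho-u\|_{L_p(\Omega)}\to 0$ as $\rho\to 0$, while for each fixed $\rho$ the family $\{u_\rho\}_{u\in K}$ is uniformly bounded and equicontinuous in $C(\overline\Omega)$ (the bound on $\|\nabla u_\rho\|_\infty$ comes from $\|u\|_{L_p}\le k_1$ and $\|\nabla\eta_\rho\|_{L_{p'}}$). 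Step~1 therefore yields a $C(\overline\Omega)$-convergent, hence $L_p$-convergent, subsequence of $\{u_\rho^n\}$ for each $\rho$, and a diagonal argument on a sequence $\rho_m\to 0$ produces a Cauchy subsequence in $L_p(\Omega)$.

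\emph{Step 3: the $C^\alpha(\overline\Omega)$ embedding.} Take $\{u_n\}\subset K$ with $\|u_n\|_{C^\beta}\le M$. Since the $C^\beta$ seminorm controls the $C$-modulus of continuity via $|u_n(x)-u_n(y)|\le M|x-y|^\beta$, the family is uniformly bounded and equicontinuous in $C(\overline\Omega)$; Step~1 yields a subsequence converging uniformly to some $u$, and passing to the limit in the Hölder bound gives $u\in C^\beta(\overline\Omega)$. For the $C^\alpha$-convergence, the standard interpolation inequality
\begin{equation*}
[w]_\alpha \le 2\|w\|_\infty^{1-\alpha/\beta}\,[w]_\beta^{\alpha/\beta}
\end{equation*}
applied to $w=u_n-u$ converts the uniform convergence into $C^\alpha$-convergence, since $[u_n-u]_\beta\le 2M$ is bounded while $\|u_n-u\|_\infty\to 0$.

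\emph{Main obstacle.} None of the three statements is genuinely hard; the one requiring the most care is the $L_p$-criterion, where one must verify that the mollified family is simultaneously uniformly bounded and equicontinuous in $C$, uniformly in the mollification scale in a suitable sense, and that the diagonal extraction produces a limit independent of $\rho$. The rest reduces to bookkeeping, and the cited reference \cite{antontsev1989boundary} contains the complete arguments.
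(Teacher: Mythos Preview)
Your proposal is a correct and standard outline of the three classical results (Arzelà--Ascoli, Fréchet--Kolmogorov, and the compact Hölder embedding via interpolation). There is nothing to compare, however: the paper does not prove Lemma~\ref{L3} at all. It is listed among the ``well-known definitions and results'' that the authors simply recall, with a pointer to section~II.2 of \cite{antontsev1989boundary}; no argument is given in the paper itself. Your sketch is exactly the kind of proof one finds in that reference or in any standard text, and your closing remark that the cited reference contains the complete arguments is precisely the paper's own position.
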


The following result can be found as eq. (2.23) in \cite{antontsev1989boundary}.
\begin{lemma}\label{Lem4}
	Let ${\Omega}\subset\mathbb{R}^n$ be bounded {and Lipschitz}, and $u \in W^2_1(\Omega)$, $\int_\Omega u =0$. Then
	\[
	\|u\|_{L_q(\partial\Omega)} \leq C \|\nabla u\|_{L_2(\Omega)}^{\alpha} \|u\|_{L_2(\Omega)}^{1-\alpha}, \quad \alpha\in (0,1), \quad q=\frac{2(n-1)}{n-2\alpha}.
	\]
\end{lemma}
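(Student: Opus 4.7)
The plan is to reduce the inequality to four standard ingredients: a Sobolev extension operator to $\mathbb{R}^n$, an interpolation of fractional Sobolev norms between $L^2$ and $H^1$, the trace embedding into $L^q$ on the boundary, and the Poincaré--Wirtinger inequality that is what activates the mean-zero hypothesis. The only place where the Lipschitz (as opposed to smoother) regularity of $\partial\Omega$ is really felt is in the trace step, and this is where I expect the main technical effort.

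First I would invoke Stein's extension operator on the bounded Lipschitz domain $\Omega$ to produce $\tilde u = Eu \in H^1(\mathbb{R}^n)$ satisfying $\|\tilde u\|_{L_2(\mathbb{R}^n)} \le C\|u\|_{L_2(\Omega)}$ and $\|\tilde u\|_{H^1(\mathbb{R}^n)} \le C\|u\|_{H^1(\Omega)}$ simultaneously. Then, using the Fourier characterization of the Bessel-potential space $H^\alpha(\mathbb{R}^n)$ together with H\"older's inequality in the Fourier variable (with conjugate exponents $1/(1-\alpha)$ and $1/\alpha$), I would obtain the multiplicative interpolation
$$\|\tilde u\|_{H^\alpha(\mathbb{R}^n)} \le \|\tilde u\|_{L_2(\mathbb{R}^n)}^{\,1-\alpha}\,\|\tilde u\|_{H^1(\mathbb{R}^n)}^{\,\alpha},$$
which, after restriction to $\Omega$ and the boundedness of $E$, yields $\|u\|_{H^\alpha(\Omega)} \le C\,\|u\|_{L_2(\Omega)}^{1-\alpha}\,\|u\|_{H^1(\Omega)}^{\alpha}$.

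Next, for $\alpha\in(1/2,1)$ I would apply the trace theorem on Lipschitz domains (Ne\v{c}as, Grisvard) to obtain the bounded operator $\gamma:H^\alpha(\Omega) \to H^{\alpha-1/2}(\partial\Omega)$, followed by the Sobolev embedding on the $(n-1)$-dimensional compact manifold $\partial\Omega$, which gives $H^{\alpha-1/2}(\partial\Omega) \hookrightarrow L_q(\partial\Omega)$ with exponent determined by $1/q = 1/2 - (\alpha-1/2)/(n-1)$, i.e.\ exactly $q = 2(n-1)/(n-2\alpha)$. Composing the three inequalities delivers $\|u\|_{L_q(\partial\Omega)} \le C\,\|u\|_{L_2(\Omega)}^{1-\alpha}\,\|u\|_{H^1(\Omega)}^{\alpha}$.

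Finally, the hypothesis $\int_\Omega u=0$ enters only at the last step, through the Poincar\'e--Wirtinger inequality $\|u\|_{L_2(\Omega)} \le C_P\,\|\nabla u\|_{L_2(\Omega)}$ on bounded Lipschitz domains. This upgrades $\|u\|_{H^1(\Omega)} \le (1+C_P^2)^{1/2}\,\|\nabla u\|_{L_2(\Omega)}$, and substituting converts the $\|u\|_{H^1}^\alpha$ factor into $\|\nabla u\|_{L_2}^\alpha$, producing the claimed estimate with a constant depending only on $n$, $\alpha$, and the Lipschitz character of $\partial\Omega$. The hardest step will be the rigorous trace and fractional Sobolev embedding on the merely Lipschitz surface $\partial\Omega$; one must either reduce to local graph charts via a partition of unity and argue with reflection-based extensions, or appeal directly to the Jerison--Kenig / Jonsson--Wallin theory of traces on rough boundaries, and matching the Besov/Bessel indices to $q = 2(n-1)/(n-2\alpha)$ is where the bookkeeping is tightest. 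All the other ingredients (extension, Fourier interpolation, Poincar\'e) are routine.
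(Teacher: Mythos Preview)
The paper does not prove this lemma; it simply cites it as eq.~(2.23) in \cite{antontsev1989boundary}. There is therefore no proof in the paper to compare your approach against, and your sketch already supplies more than the paper does.

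Your chain of estimates---Stein extension, Fourier interpolation $\|u\|_{H^\alpha}\le\|u\|_{L_2}^{1-\alpha}\|u\|_{H^1}^\alpha$, the trace $H^\alpha(\Omega)\to H^{\alpha-1/2}(\partial\Omega)$, Sobolev embedding on $\partial\Omega$, and Poincar\'e--Wirtinger to convert $\|u\|_{H^1}$ into $\|\nabla u\|_{L_2}$---is correct and standard for $\alpha\in(1/2,1)$, and you rightly flag the Lipschitz trace step as the delicate one. The restriction $\alpha>1/2$ that you impose is not an artifact of your method but is in fact sharp: the inequality as stated fails for $\alpha<1/2$. Take $\Omega=B(0,1)\subset\mathbb{R}^n$ and $u_k(x)=\phi(k(1-|x|))-c_k$, where $\phi$ is a smooth cutoff with $\phi(0)=1$ and $c_k$ is chosen so that $\int_\Omega u_k=0$. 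Then $u_k|_{\partial\Omega}\to1$, while $\|u_k\|_{L_2(\Omega)}\sim k^{-1/2}$ and $\|\nabla u_k\|_{L_2(\Omega)}\sim k^{1/2}$, so $\|\nabla u_k\|_{L_2}^{\alpha}\|u_k\|_{L_2}^{1-\alpha}\sim k^{\alpha-1/2}\to0$ when $\alpha<1/2$, contradicting the bound. Thus the stated range $\alpha\in(0,1)$ is too broad; your argument already covers the full range on which the inequality actually holds, and the endpoint $\alpha=1/2$ (i.e.\ $q=2$) can be recovered separately as the classical multiplicative trace inequality $\|u\|_{L_2(\partial\Omega)}^2\le C\,\|u\|_{L_2(\Omega)}\|u\|_{H^1(\Omega)}$.
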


\section{Weak and strong solutions to the fictitious domain problem}\label{S3}

We begin by proving the global existence of a weak solution to the auxiliary problem \eqref{St 8}--\eqref{St 10}.

\begin{theorem}\label{Th1}
	If \( f \in L_2(0,T;L_{6/5}(\Omega)) \) and \( v_0 \in L_2(\Omega) \), then there exists at least one weak solution to problem \textnormal{\eqref{St 8}--\eqref{St 10}} satisfying the estimate
	\begin{equation} \label{St 11}
		\|v^{\varepsilon}\|^2_{L_2(0,T;V(D))} + \|v^{\varepsilon}\|^2_{L_\infty(0,T;L_2(D))} 
		+ \frac{1}{\varepsilon} \int_0^T \|v^{\varepsilon}\|_{L_2(D_1)}^{2 - \beta} \, dt \leq C,
	\end{equation}
	{with some $C$ depending on $f$, $\mu$, $\Omega$, and $D$. }
	Furthermore, as $\varepsilon \to 0$, the weak solution of \eqref{St 8}--\eqref{St 10} converges to the weak solution of the original problem \eqref{St 1}--\eqref{St 3}.
\end{theorem}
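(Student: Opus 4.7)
My plan is to use the classical Faedo--Galerkin method, adapted to accommodate the nonlocal singular penalty term. Fixing a basis $\{\phi_k\}$ of $V(D)$ (for instance, eigenfunctions of the Stokes operator on $D$), I would seek approximations $v^\varepsilon_N = \sum_{k=1}^N c_k^N(t)\phi_k$ solving the Galerkin projection of the weak formulation against $\phi_1,\dots,\phi_N$. The resulting ODE system for $c^N$ has a continuous right-hand side: because $\beta<1$, the nonlinear map $v\mapsto \xi v/\|v\|_{L_2(D_1)}^\beta$, defined to be zero when $v|_{D_1}\equiv 0$, is continuous from $L_2(D_1)$ into itself (its $L_2(D_1)$-norm equals $\|v\|_{L_2(D_1)}^{1-\beta}$, which vanishes at $v=0$). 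Peano's theorem then yields a local-in-time Galerkin solution.

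Testing the Galerkin equation by $v^\varepsilon_N$ itself, the convective term drops by $\operatorname{div} v^\varepsilon_N=0$ together with the Dirichlet condition, and the penalty contribution equals exactly $\|v^\varepsilon_N\|_{L_2(D_1)}^{2-\beta}/\varepsilon$, yielding the identity
\begin{equation*}
\frac{1}{2}\frac{d}{dt}\|v^\varepsilon_N\|_{L_2(D)}^2 + \mu\|\nabla v^\varepsilon_N\|_{L_2(D)}^2 + \frac{\|v^\varepsilon_N\|_{L_2(D_1)}^{2-\beta}}{\varepsilon} = (f^\varepsilon, v^\varepsilon_N).
\end{equation*}
The right-hand side is controlled by H\"older with the pair $(L_{6/5},L_6)$, the Sobolev embedding $V(D)\hookrightarrow L_6(D)$, and Young's inequality to absorb $\|\nabla v^\varepsilon_N\|^2$. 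Integrating in time yields \eqref{St 11} uniformly in $N$, so the Galerkin trajectories are global. Combined with the standard 3D bound $\partial_t v^\varepsilon_N \in L_{4/3}(0,T;V(D)^\ast)$ (for fixed $\varepsilon$, the penalty term is bounded in $L_\infty(0,T;L_2(D_1))$ and hence admissible), the Aubin--Lions compactness lemma gives, along a subsequence, weak convergence in $L_2(0,T;V(D))$, weak-$\ast$ in $L_\infty(0,T;L_2(D))$, and strong convergence in $L_2(0,T;L_2(D))$ to a limit $v^\varepsilon$. The strong $L_2$-convergence disposes of the convective term; for the penalty term, for a.e.\ $t$ one checks that $v^\varepsilon_N(t)/\|v^\varepsilon_N(t)\|_{L_2(D_1)}^\beta$ converges in $L_2(D_1)$ to $v^\varepsilon(t)/\|v^\varepsilon(t)\|_{L_2(D_1)}^\beta$ (the case of vanishing denominator is handled by the convention above, since the $L_2(D_1)$-norm collapses to $\|v^\varepsilon_N\|_{L_2(D_1)}^{1-\beta}\to 0$), and the uniform $L_\infty(0,T)$-bound on $\|v^\varepsilon_N\|_{L_2(D_1)}^{1-\beta}$ lets dominated convergence conclude.

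For the convergence as $\varepsilon\to 0$, estimate \eqref{St 11} shows that $\{v^\varepsilon\}$ is bounded in $L_2(0,T;V(D))\cap L_\infty(0,T;L_2(D))$ with $\int_0^T \|v^\varepsilon\|_{L_2(D_1)}^{2-\beta}\,dt = O(\varepsilon)$. Testing against $\phi\in V(\Omega)$ extended by zero to $D$ makes the penalty term disappear (since $\xi\equiv 0$ on $\Omega$), so $\partial_t v^\varepsilon$ is bounded in $L_{4/3}(0,T;V(\Omega)^\ast)$ \emph{uniformly in $\varepsilon$}; Aubin--Lions then yields strong convergence in $L_2(0,T;L_2(\Omega))$ to a limit $v$. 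The $O(\varepsilon)$ bound forces $v\equiv 0$ a.e.\ on $(0,T)\times D_1$, so the trace of $v$ on $\partial\Omega$, computed from the $D_1$ side, vanishes, producing the Dirichlet condition on $S$; passing to the limit in the weak form with test functions supported in $\Omega$ then yields the weak formulation of \eqref{St 1}--\eqref{St 3}. The main obstacle throughout is the nonlocal, singular penalty term: the condition $\beta<1$ is precisely what is needed, as it both keeps $v\mapsto v/\|v\|_{L_2(D_1)}^\beta$ continuous at $v=0$ (ensuring Galerkin existence and passage to the limit for fixed $\varepsilon$) and yields the superlinear dissipation $\|v\|_{L_2(D_1)}^{2-\beta}/\varepsilon$ that forces $v|_{D_1}\to 0$ as $\varepsilon\to 0$.
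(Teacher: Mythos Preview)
Your argument is correct and follows the same Galerkin skeleton as the paper (Stokes eigenfunction basis, testing with $v^\varepsilon_N$ to obtain \eqref{St 11}, then compactness and passage to the limit). The genuine difference lies in the compactness mechanism. The paper does not bound $\partial_t v^\varepsilon$ in a dual space; instead it proves a time-translation estimate
\[
\int_0^{T-\delta}\|v^\varepsilon(t+\delta)-v^\varepsilon(t)\|_{L_2(D)}^2\,dt \le C\big(\delta^{1/2}+\delta^{1/(2-\beta)}\big),
\]
obtained by integrating the equation over $[t,t+\delta]$ and handling the penalty term via H\"older with the exponent pair $(\tfrac{2-\beta}{1-\beta},2-\beta)$ so that only the $\varepsilon$-uniform quantity $\varepsilon^{-1}\int_0^T\|v^\varepsilon\|_{L_2(D_1)}^{2-\beta}$ from \eqref{St 11} is needed. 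This bound is uniform in both $N$ and $\varepsilon$, so the Riesz--Fr\'echet--Kolmogorov criterion gives strong $L_2(0,T;L_2(D))$ compactness in one shot for both limits. Your Aubin--Lions route, by contrast, yields a $V(D)^*$-bound on $\partial_t v^\varepsilon_N$ that blows up like $\varepsilon^{-1/(2-\beta)}$, which is why you (correctly) restrict test functions to $V(\Omega)$ for the $\varepsilon\to0$ step and recover the boundary condition on $S$ a posteriori from $v^\varepsilon\to0$ in $L_{2-\beta}(0,T;L_2(D_1))$ and the weak $L_2(0,T;V(D))$ limit. Your path is the more textbook one and avoids the extra translation lemma; the paper's path buys a single $\varepsilon$-uniform compactness estimate on all of $D$ and stays closer to the Ladyzhenskaya--Antontsev framework it cites.
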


\noindent
\textit{Proof.} To prove Theorem~\ref{Th1}, we employ the classical method of a priori estimates~\cite[{section~III.1}]{antontsev1989boundary}.  
Assuming for a moment that $v^{\varepsilon}$ is sufficiently smooth, we take the $L_2(D)$ scalar product of equation \eqref{St 8} with $v^{\varepsilon}$.  
After estimating the nonlinear terms using Hölder’s inequality, Sobolev embeddings, and Young’s inequality, we obtain
\begin{align*}
	\frac{1}{2}\frac{d}{dt}\|v^{\varepsilon}\|^{2}_{L_2(D)} 
	+ \mu\|\nabla v^{\varepsilon}\|^{2}_{L_2 (D)} 
	+ \frac{1}{\varepsilon} \|v^{\varepsilon}\|^{2-\beta}_{L_2 (D_1)} 
	&= \int_D f^{\varepsilon} v^{\varepsilon} \, dx  \\
	&\leq \|f^{\varepsilon}\|_{L_{6/5}(D)} \|v^{\varepsilon}\|_{L_6 (D)} \\
	&\leq C\|f\|_{L_{6/5}(\Omega)} \|\nabla v^{\varepsilon}\|_{L_2 (D)} \\
	&\leq \frac{\mu}{2} \|\nabla v^{\varepsilon}\|^{2}_{L_2 (D)} + C\|f\|^{2}_{L_{6/5} (\Omega)}.
\end{align*}
Integrating with respect to $t$ yields the estimate \eqref{St 11}.

\medskip
To proceed with the proof of the theorem, we next need a few auxiliary lemmas.

\begin{lemma}
	For any $\delta$, $0 < \delta < T$, the following inequality holds:
	\begin{equation} \label{St 12}
		\int_{0}^{T-\delta} \|v^{\varepsilon}(t+\delta) - v^{\varepsilon}(t)\|^{2}_{L_2 (D)} \, dt 
		\leq C(\delta^{1/2} + \delta^{1/(2 - \beta)}).
	\end{equation}
\end{lemma}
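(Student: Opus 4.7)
My plan is to test the momentum equation with $w(t):=v^\varepsilon(t+\delta)-v^\varepsilon(t)\in V(D)$ itself on the sliding time window $(t,t+\delta)$, then integrate in $t$, and exploit the a priori bound \eqref{St 11}. Rigorously, this is carried out at the level of a standard Galerkin approximation, for which $w(t)$ lies in the span of the basis functions of $V(D)$ and can be used as an admissible test vector in the associated ODE; the resulting estimates are $\varepsilon$- and $N$-uniform and pass to the limit. Since $\operatorname{div}w(t)=0$, the pressure drops out of
\begin{equation*}
\|w(t)\|^2_{L_2(D)}=\int_t^{t+\delta}\!\!\int_D \partial_s v^\varepsilon\cdot w(t)\,dx\,ds,
\end{equation*}
and H\"older's inequality, the Sobolev embedding $H^1\hookrightarrow L^6$, and the 3D interpolation $\|v\|_{L^4(D)}\le C\|v\|^{1/4}_{L^2(D)}\|\nabla v\|^{3/4}_{L^2(D)}$ applied to $(v^\varepsilon\cdot\nabla)v^\varepsilon=\nabla\!\cdot\!(v^\varepsilon\otimes v^\varepsilon)$ yield
\begin{equation*}
\|w(t)\|^2_{L_2(D)}\le \|\nabla w(t)\|_{L_2(D)}\!\int_t^{t+\delta}\!\!G(s)\,ds\;+\;\|w(t)\|_{L_2(D_1)}\!\int_t^{t+\delta}\!\!\frac{\|v^\varepsilon(s)\|^{1-\beta}_{L_2(D_1)}}{\varepsilon}\,ds,
\end{equation*}
where $G(s)=\mu\|\nabla v^\varepsilon\|_{L_2(D)}+C\|v^\varepsilon\|^2_{L^4(D)}+C\|f\|_{L_{6/5}(\Omega)}$. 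By \eqref{St 11} and the interpolation $v^\varepsilon\in L^{8/3}(0,T;L^4(D))$, $G$ lies in $L^{4/3}(0,T)$ with an $\varepsilon$-uniform norm.

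Integrating in $t\in(0,T-\delta)$, I split the right-hand side into a viscous/convective/forcing part $I_1$ and a penalty part $I_2$. For $I_1$, I apply Cauchy--Schwarz in $t$, noting that the inner integral is the convolution $A(t)=(G\ast\chi_{[-\delta,0]})(t)$. Young's convolution inequality with exponents $(4/3,4/3,2)$ gives $\|A\|_{L^2(0,T-\delta)}\le \delta^{3/4}\|G\|_{L^{4/3}(0,T)}$, and together with the bound on $\|\nabla w\|_{L_2(0,T;L_2(D))}$ from \eqref{St 11} this yields $I_1\le C\delta^{3/4}\le C\delta^{1/2}$ for $\delta\le 1$, with $C$ independent of $\varepsilon$.

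The genuine obstacle is $I_2$, where the factor $1/\varepsilon$ must be absorbed using the critical bound in \eqref{St 11}. My plan is to estimate $\|w(t)\|_{L_2(D_1)}\le \|v^\varepsilon(t)\|_{L_2(D_1)}+\|v^\varepsilon(t+\delta)\|_{L_2(D_1)}$ and, for each $\tau\in\{t,t+\delta\}$, apply Young's inequality with conjugate exponents $(2-\beta)/(1-\beta)$ and $2-\beta$:
\begin{equation*}
\|v^\varepsilon(s)\|^{1-\beta}_{L_2(D_1)}\|v^\varepsilon(\tau)\|_{L_2(D_1)}\le \tfrac{1-\beta}{2-\beta}\|v^\varepsilon(s)\|^{2-\beta}_{L_2(D_1)}+\tfrac{1}{2-\beta}\|v^\varepsilon(\tau)\|^{2-\beta}_{L_2(D_1)},
\end{equation*}
valid precisely because $0\le\beta<1$. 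Each of the resulting integrands equals $\tfrac{1}{\varepsilon}\|v^\varepsilon\|^{2-\beta}_{L_2(D_1)}$ on a time window of length $\delta$, so Fubini together with the critical bound $\tfrac{1}{\varepsilon}\int_0^T\|v^\varepsilon\|^{2-\beta}_{L_2(D_1)}\,ds\le C$ from \eqref{St 11} yields $I_2\le C\delta\le C\delta^{1/(2-\beta)}$ for $\delta\le 1$, \emph{uniformly} in $\varepsilon$. Summing the two contributions gives \eqref{St 12}. For $\delta>1$ the statement is trivial from the $L^\infty(L^2)$ bound, so we may absorb that case into the constant.
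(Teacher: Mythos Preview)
Your argument is correct and follows the same overall strategy as the paper: test the momentum equation with $w(t)=v^\varepsilon(t+\delta)-v^\varepsilon(t)$ on the sliding window $[t,t+\delta]$, integrate in $t$, and control all terms using the a~priori bound \eqref{St 11}. For the viscous/convective/forcing part the paper simply defers to \cite{antontsev1989boundary}, while you spell out the $L^{4/3}$ bound on $G$ and the Young convolution estimate, obtaining $O(\delta^{3/4})$ (hence $O(\delta^{1/2})$).

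The one genuine difference is in the penalty term $I_2$. The paper splits $\varepsilon^{-1}=\varepsilon^{-1/(2-\beta)}\cdot\varepsilon^{-(1-\beta)/(2-\beta)}$ and applies H\"older in $\tau$ and then in $t$ with exponents $(2-\beta)$ and $(2-\beta)/(1-\beta)$, using the critical bound $\varepsilon^{-1}\!\int_0^T\|v^\varepsilon\|_{L_2(D_1)}^{2-\beta}\,dt\le C$ twice to arrive at $I_2\le C\delta^{1/(2-\beta)}$. Your route---pointwise Young with the same pair of exponents followed by Fubini on the rectangle $\{t<s<t+\delta\}$---is slightly more direct and in fact yields the sharper bound $I_2\le C\delta$, which you then weaken to $C\delta^{1/(2-\beta)}$ to match the stated inequality. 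Both approaches hinge on exactly the same critical term in \eqref{St 11}, so the difference is a matter of packaging rather than of idea.
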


\begin{proof}
	The proof follows the approach in \cite{antontsev1989boundary}.  
	Fix $\delta$ and $t$, consider equation \eqref{St 8} on the interval $\tau \in [t, t + \delta]$, and test it with a function $\psi(\tau)$ in $L_2(D)$.  
	Integrating with respect to $\tau$ over $[t, t+\delta]$, and then choosing $\psi = v^\varepsilon(t+\delta) - v^\varepsilon(t)$, we integrate once more with respect to $t \in (0, T-\delta)$.  
	After estimating terms analogously to \cite[{section~III.1.3}]{antontsev1989boundary}, we are left only with an additional contribution coming from the nonlinear term over $D_1$.  
	Estimating this term yields
\begin{align*}
	\int\limits_0^{T-\delta} \int\limits_t^{t+\delta}& \frac{\int\limits_{D_1} v^\varepsilon \psi \, dx}{\varepsilon \|v^\varepsilon\|_{L_2(D_1)}^\beta} \, d\tau \, dt
	\leq \int\limits_0^{T-\delta} \int\limits_t^{t+\delta} \frac{1}{\varepsilon} \|v^\varepsilon\|_{L_2(D_1)}^{1-\beta} \|\psi\|_{L_2(D_1)} \, d\tau \, dt  \\
	&\leq \int\limits_0^{T-\delta} \varepsilon^{\frac{-1}{2-\beta}} \|\psi\|_{L_2(D_1)} 
	\int\limits_t^{t+\delta} \varepsilon^{\frac{-(1-\beta)}{2-\beta}} \|v^\varepsilon\|_{L_2(D_1)}^{1-\beta} \, d\tau \, dt \\
	&\leq \int\limits_0^{T-\delta} \varepsilon^{\frac{-1}{2-\beta}} \|\psi\|_{L_2(D_1)} 
	\left( \int\limits_t^{t+\delta} \left( \varepsilon^{\frac{-(1-\beta)}{2-\beta}} \|v^\varepsilon\|_{L_2(D_1)}^{1-\beta} \right)^{\frac{2-\beta}{1-\beta}} \, d\tau \right)^{\frac{1-\beta}{2-\beta}} \left(\int_{t}^{t+\delta} d\tau\right)^{\frac{1}{2-\beta}}dt \\
	&\leq \int_{0}^{T-\delta} \varepsilon^{-\frac{1}{2-\beta}} \| \psi \|_{L_2 (D_1)} \left( \int_{0}^{T} \varepsilon^{-1} \|v^{\varepsilon} \|^{2-\beta}_{L_2 (D_1)} d\xi \right)^{\frac{1-\beta}{2-\beta}} \delta^{\frac{1}{2-\beta}} dt\\ &\leq
	C \delta^{\frac{1}{2-\beta}} \int\limits_0^{T} \varepsilon^{-\frac{1}{2-\beta}} \|v^{\varepsilon}(t)\|_{L_2(D_1)} \, dt \\
	&\leq C \delta^{\frac{1}{2-\beta}} \left( \int\limits_0^{T} \left( \varepsilon^{-\frac{1}{2-\beta}} \|v^{\varepsilon}(t)\|_{L_2(D_1)} \right)^{2-\beta} dt \right)^{\frac{1}{2-\beta}} \left( \int\limits_0^{T} dt \right)^{\frac{1-\beta}{2-\beta}}\\
	&\leq C \delta^{\frac{1}{2-\beta}} \left( \int_{0}^{T} \varepsilon^{-1} \|v^{\varepsilon}(t) \|^{2-\beta}_{L_2 (D_1)} dt \right)^{\frac{1}{2-\beta}} \leq C \delta^{\frac{1}{2-\beta}},
\end{align*}
	where the a priori estimate \eqref{St 11} has been used.  
	This completes the proof.
\end{proof}

\medskip
The remainder of the proof of Theorem~\ref{Th1} is based on the Galerkin method.  
We seek an approximate solution to \eqref{St 8}--\eqref{St 10} of the form
\[
v_{N}^{\varepsilon}(t,x) = \sum_{m=1}^{N} \alpha_{m}(t)\omega_{m}(x),
\]
where the coefficients $\alpha_{m}(t)$ are determined from the finite-dimensional system
\begin{multline} \label{St 13}
%	\begin{aligned}
		\big( (v_{N}^\varepsilon )_{t}, \omega_m \big)_{L_2(D)} 
		+ \mu \big( \nabla v_{N}^\varepsilon , \nabla \omega_m \big)_{L_2(D)} 
		+ \big( (v_{N}^\varepsilon \cdot \nabla) v_{N}^\varepsilon, \omega_m \big)_{L_2(D)} \\
		+ \frac{1}{\varepsilon} \int_{D_1} 
		\left( \frac{v_{N}^\varepsilon}{\| v_{N}^\varepsilon \|_{L_2(D_1)}^\beta}, \omega_m \right) dx 
		= \big( f^\varepsilon, \omega_m \big)_{L_2(D)}, \quad m = 1, \dots, N.
%	\end{aligned}
\end{multline}
The initial condition is chosen as
\begin{equation} \label{St 14}
	v_{0N} = \sum_{m=1}^{N} \alpha_m(0) \omega_m(x), \qquad v_{0N} \to v_0 \quad \text{in } V(D) \text{ as } N \to \infty,
\end{equation}
where $\omega_m$ are the eigenfunctions of the Stokes problem
\[
\Delta \omega_m - \nabla p_m = \lambda_m \omega_m, 
\quad \nabla \cdot \omega_m = 0, 
\quad \omega_m|_{S_1} = 0, 
\quad m = 1, 2, \ldots
\]
forming an orthonormal basis in $L_2(D)$.

Multiplying \eqref{St 13} by $\alpha_m(t)$, summing over $m = 1,\dots,N$, and applying the estimates above gives \eqref{St 11} for $v_N^\varepsilon$.  
Similarly, estimate \eqref{St 12} is obtained.  
Using Lemmas \ref{L1}--\ref{L3}, the bounds \eqref{St 11}--\eqref{St 12} allow us to extract from $\{v_N^\varepsilon\}$ a subsequence such that:
\begin{itemize}
	\item $v_N^\varepsilon \rightharpoonup v^\varepsilon$ weakly-* in $L_\infty(0,T;L_2(D))$ and weakly in $L_2(0,T;V(D))$,
	\item $v_N^\varepsilon \to v^\varepsilon$ strongly in $L_2(0,T;L_2(D))$,
	\item $v_N^\varepsilon \rightharpoonup v^\varepsilon$ weakly in $L_{2-\beta}(0,T;L_2(D_1))$ as $N \to \infty$.
\end{itemize}

Standard arguments then show that the limit $v^\varepsilon$ is a weak solution of \eqref{St 8}--\eqref{St 10}, and satisfies estimates \eqref{St 11}--\eqref{St 12}.  
Passing to the limit as $\varepsilon \to 0$ along a subsequence, we further obtain:
\begin{itemize}
	\item $v^\varepsilon \rightharpoonup v$ weakly-* in $L_\infty(0,T;L_2(D))$ and weakly in $L_2(0,T;V(D))$,
	\item $v^\varepsilon \to v$ strongly in $L_2(0,T;L_2(D))$,
	\item $v^\varepsilon \to 0$ strongly in $L_{2-\beta}(0,T;L_2(D_1))$.
\end{itemize}

Hence, the limiting function $v(t,x)$ is a weak solution of the original problem \eqref{St 1}--\eqref{St 3}.  
This completes the proof of Theorem~\ref{Th1}.

We proceed in this section with proving the local existence of the strong solution to the auxiliary problem. 

\begin{theorem} Assume $f \in L_2(0, T; L_2(\Omega))$ and $v_0(x) \in V(\Omega)$, then for  a sufficiently small  $ T_0>0$, there exists the unique solution $ v^{\varepsilon}$ to \eqref{St 8}--\eqref{St 10} such that 
	$ v^{\varepsilon}\in L_2 (0,T_0; W^{2}_{2} (D))\cap L_\infty (0,T_0; V(D))$, $v^{\varepsilon}_{t}\in L_2 (0,T_0 ; L_2 (D))$ and   the following estimate holds: 
\begin{multline} \label{St 15}
\| v^{\varepsilon} \|^{2}_{L_\infty (0,T_0; V(D))} + \frac{1}{\varepsilon} \|v^{\varepsilon} \|^{2 - \beta}_{L_\infty (0,T_0; L_2 (D_1))} + \\
+ \|v^{\varepsilon}_{t} \|^{2}_{L_2 (0,T_0 ; L_2 (D))} 
+ \| v^{\varepsilon} \|^{2}_{L_2 (0,T_0; W^{2}_{2} (D))} 
+ \frac{1}{\varepsilon} \int_{0}^{T_0} \|\nabla v^{\varepsilon} \|^{2-\beta}_{L_2 (D_1)} \, dt 
\leq C,
\end{multline}
{with some $C$ depending on $f$, $\mu$, $\Omega$, and $D$.}
\end{theorem}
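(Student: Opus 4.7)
The plan is to follow the classical scheme for local strong solutions of the 3D Navier--Stokes equations (cf.\ section~III.3 of \cite{antontsev1989boundary}), starting from the Galerkin approximations $v_N^\varepsilon$ constructed in the proof of Theorem~\ref{Th1} and now deriving two $N$-uniform higher-order a priori estimates. The first is obtained by testing the ODE system \eqref{St 13} against $(v_N^\varepsilon)_t$ (i.e.\ multiplying by $\alpha_m'(t)$ and summing), and the second against the Stokes operator $A v_N^\varepsilon = -P\Delta v_N^\varepsilon$ (a linear combination in the eigenbasis $\{\omega_m\}$). Strong existence on a short interval $[0,T_0]$ then follows by an Aubin--Lions compactness argument, and uniqueness by an energy estimate for the difference of two solutions.

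The first test generates $\|(v_N^\varepsilon)_t\|^2_{L_2(D)}$, the dissipation $\tfrac{\mu}{2}\tfrac{d}{dt}\|\nabla v_N^\varepsilon\|^2_{L_2(D)}$, and the clean monotone penalty contribution
\[
\frac{1}{\varepsilon\|v_N^\varepsilon\|^\beta_{L_2(D_1)}}\int_{D_1} v_N^\varepsilon\cdot (v_N^\varepsilon)_t\,dx = \frac{1}{(2-\beta)\varepsilon}\frac{d}{dt}\|v_N^\varepsilon\|^{2-\beta}_{L_2(D_1)},
\]
whose time integral yields the second term in \eqref{St 15}. The second test produces $\tfrac12\tfrac{d}{dt}\|\nabla v_N^\varepsilon\|^2_{L_2(D)}$ together with $\mu\|Av_N^\varepsilon\|^2_{L_2(D)}$, the latter dominating $\|v_N^\varepsilon\|^2_{W^2_2(D)}$ via Stokes elliptic regularity on $D$. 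After an integration by parts on $D_1$ (the boundary piece on $S_1$ vanishes since $v_N^\varepsilon|_{S_1}=0$, while the piece on $S=\partial\Omega$ is controlled by the trace inequality of Lemma~\ref{Lem4}) and the Poincar\'e inequality $\|w\|_{L_2(D_1)}\le C_P\|\nabla w\|_{L_2(D_1)}$ for functions vanishing on $S_1$, the penalty contribution in the second test admits the lower bound
\[
\frac{1}{\varepsilon\|v_N^\varepsilon\|^\beta_{L_2(D_1)}}\int_{D_1}|\nabla v_N^\varepsilon|^2\,dx \;\ge\; \frac{c}{\varepsilon}\,\|\nabla v_N^\varepsilon\|^{2-\beta}_{L_2(D_1)},
\]
which after time integration produces the fifth term in \eqref{St 15}.

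The convection terms in both estimates are controlled by the 3D interpolation chain $\|(v\cdot\nabla)v\|_{L_2}\le \|v\|_{L_6}\|\nabla v\|_{L_3}\le C\|\nabla v\|^{3/2}_{L_2}\|v\|^{1/2}_{W^2_2(D)}$ and analogous bounds for $|((v\cdot\nabla)v,v_t)|$, with Young's inequality absorbing fractions of $\|v_N^\varepsilon\|^2_{W^2_2(D)}$ and $\|(v_N^\varepsilon)_t\|^2_{L_2(D)}$ on the left. Adding the two inequalities gives a differential inequality of the schematic form $y'(t)+\text{(good terms)}\le C_1+C_2\,y(t)^p$ with $p>1$, whose solution stays bounded only on a short interval $[0,T_0]$ depending on $\|v_0\|_{V(D)}$, $\|f\|_{L_2(0,T;L_2(\Omega))}$, $\mu$, $\varepsilon$ and $\beta$ --- the familiar source of local-in-time existence for the 3D Navier--Stokes system. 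Passage to the limit $N\to\infty$ along a subsequence then identifies $v^\varepsilon$ as a strong solution, with the pressure $P^\varepsilon$ recovered by a de Rham--type argument.

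The hard part will be the nonlinear denominator $\|v^\varepsilon\|^\beta_{L_2(D_1)}$. For $\beta>0$ it is singular when $v^\varepsilon$ vanishes on $D_1$, which is \emph{a priori} possible (in particular at $t=0$, where $v_0$ is extended by zero on $D_1$), and this complicates both the limit passage in $N$ and the uniqueness argument. In the limit I would combine the strong $L_2(0,T_0;L_2(D_1))$ convergence of $v_N^\varepsilon$ given by Aubin--Lions with the lower semicontinuity $\liminf_N\|v_N^\varepsilon\|^{2-\beta}_{L_2(D_1)}\ge \|v^\varepsilon\|^{2-\beta}_{L_2(D_1)}$ to identify the penalty term in the limiting equation. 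For uniqueness I would write the difference of the two penalty terms as $w_1/\|w_1\|^\beta - w_2/\|w_2\|^\beta$ and invoke a H\"older-type estimate in the scalar variable $\|w\|_{L_2(D_1)}$, which is the precise point where the hypothesis $\beta<1$ becomes essential.
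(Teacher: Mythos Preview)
Your overall scheme---two higher-order tests (against $(v_N^\varepsilon)_t$ and against the Stokes operator $A v_N^\varepsilon$), the exact identity for the penalty term in the first test, the Poincar\'e lower bound in the second, and the cubic ODE comparison---is exactly the route taken in the paper. Two points, however, are genuine gaps.

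\textbf{The time $T_0$ must not depend on $\varepsilon$.} You write that $T_0$ depends on $\varepsilon$, but the entire purpose of \eqref{St 15} is an $\varepsilon$-uniform estimate, used afterwards in the convergence theorem. The observation you are missing is that $v_0$ is extended by zero into $D_1$, so the quantity
\[
y(0)=\mu\|\nabla v_0\|^2_{L_2(D)}+\tfrac{1}{\varepsilon}\|v_0\|^{2-\beta}_{L_2(D_1)}=\mu\|\nabla v_0\|^2_{L_2(\Omega)}
\]
is independent of $\varepsilon$. The comparison with $S'=C S^3$ then yields $T_0=T_0(\|v_0\|_{V(\Omega)},\|f\|_{L_2(Q_T)},\mu)$ only. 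If you do not exploit this, the theorem as stated is not proved.

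\textbf{Uniqueness: a H\"older bound is the wrong tool.} Trying to \emph{estimate} the difference $v_1/\|v_1\|^\beta-v_2/\|v_2\|^\beta$ by a H\"older-type inequality in the scalar $\|\cdot\|_{L_2(D_1)}$ runs into the singular denominator and the $1/\varepsilon$ prefactor; you will not be able to absorb it on the right-hand side of the Gronwall inequality. The paper instead observes that the penalty operator is \emph{monotone}: with $a=\|v_1\|_{L_2(D_1)}$, $b=\|v_2\|_{L_2(D_1)}$ and Cauchy--Schwarz $(v_1,v_2)_{L_2(D_1)}\le ab$,
\[
\int_{D_1}\!\Big(\tfrac{v_1}{a^\beta}-\tfrac{v_2}{b^\beta}\Big)\cdot(v_1-v_2)\,dx
\;\ge\; a^{2-\beta}+b^{2-\beta}-ab(a^{-\beta}+b^{-\beta})
=(a-b)\big(a^{1-\beta}-b^{1-\beta}\big)\ge 0,
\]
so the entire penalty contribution can simply be dropped from the energy identity for $\omega=v_1-v_2$, and uniqueness follows from the standard convection estimate plus Gronwall. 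This is where $\beta<1$ enters (so that $s\mapsto s^{1-\beta}$ is increasing), not via a H\"older modulus.
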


\begin{proof} % --- assumptions: smooth domain, div-free fields, homogeneous BCs for Stokes operator ---
	Denote by $\tilde\Delta $ the Stokes operator.  
Since $D$ has a smooth boundary, by the Stokes regularity we have
	\[
	\|v\|_{W^2_2(D)\cap V(D)} \le C_{\rm reg}\,\|\tilde\Delta v\|_{L_2(D)}
	\]
for $v\in W^2_2(D)\cap V(D)$.
	Hence, using interpolation and Sobolev embeddings,
	\[
	\|\nabla v\|_{L^3(D)} \le \|\nabla v\|_{L^2(D)}^{1/2}\|\nabla v\|_{L^6(D)}^{1/2}
	\le C\,\|\nabla v\|_{L^2(D)}^{1/2}\|v\|_{W^2_2(D)}^{1/2}
	\le C\,\|\tilde\Delta v\|_{L^2(D)}^{1/2}\|\nabla v\|_{L^2(D)}^{1/2}.
	\]
	
	Taking the $L_2(D)$-inner product of \eqref{St 8} with $v_t$  yields
	\[
	\|v_t\|_{L_2(D)}^2 - \frac{\mu}{2}\frac{d}{dt}\|\nabla v\|_{L_2(D)}^2
	+ \int_D (v\cdot\nabla)v\cdot v_t\,dx
	= \int_D f\,v_t\,dx - \frac{1}{\varepsilon\|v\|_{L_2(D_1)}^\beta}\int_{D_1} v\cdot v_t\,dx.
	\]
We estimate the nonlinear term by the Gagliardo--Nirenberg and Young inequalities:
	\[
	\begin{aligned}
		\Big|\int_D (v\cdot\nabla)v\cdot v_t\,dx\Big|
		&\le \|v_t\|_{L_2(D)}\|v\|_{L_6(D)}\|\nabla v\|_{L_3(D)} \\
		&\le C\|v_t\|_{L_2(D)}\|\nabla v\|_{L_2(D)}\|\tilde\Delta v\|_{L_2(D)}^{1/2}\|\nabla v\|_{L_2(D)}^{1/2} \\
		&\le \delta_1\|v_t\|_{L_2(D)}^2 + C_{\delta_1}\|\tilde\Delta v\|_{L_2(D)}\|\nabla v\|_{L_2(D)}^3 \\
		&\le \delta_1\|v_t\|_{L_2(D)}^2 + \theta_1\|\tilde\Delta v\|_{L_2(D)}^2 + C_{\theta_1,{\delta_1}}\|\nabla v\|_{L_2(D)}^6.
	\end{aligned}
	\]
	Similarly,
	\[
	\int_D f v_t \,dx \le \delta_2\|v_t\|_{L_2(D)}^2 + C_{\delta_2}\|f\|_{L_2(D)}^2.
	\]
	The damping-derivative term is exactly
	\[
	\frac{1}{\varepsilon\|v\|_{L_2(D_1)}^\beta}\int_{D_1} v\cdot v_t\,dx
	= \frac{1}{2\varepsilon(2-\beta)}\frac{d}{dt}\|v\|_{L_2(D_1)}^{2-\beta}.
	\]
	Choosing $\delta_1+\delta_2<1$ we obtain
	\begin{equation}\label{St 17}
		\|v_t\|_{L_2(D)}^2 + \frac{d}{dt}\Big({\frac\mu2}\|\nabla v\|_{L_2(D)}^2 + \frac{1}{\varepsilon}\|v\|_{L_2(D_1)}^{2-\beta}\Big)
		\le \theta_1\|\tilde\Delta v\|_{L_2(D)}^2 + C\big(\|\nabla v\|_{L_2(D)}^6 + \|f\|_{L_2(D)}^2\big).
	\end{equation}
	Next we take the $L_2(D)$-inner product of \eqref{St 8} with $\tilde\Delta v$.
 Then
	\[
	{\mu}\|\tilde\Delta v\|_{L_2(D)}^2 - \frac{1}{\varepsilon\|v\|_{L_2(D_1)}^\beta}\int_{D_1} v\cdot\tilde\Delta v\,dx
	= \int_D (v_t+(v\cdot\nabla)v-f)\cdot\tilde\Delta v\,dx.
	\]
	Estimating the right-hand side by Young's inequality gets:
	\[
	\begin{aligned}
		\int_D v_t\cdot\tilde\Delta v &\le \theta_2\|\tilde\Delta v\|_{L_2}^2 + C_{\theta_2}\|v_t\|_{L_2}^2,\\
		\int_D (v\cdot\nabla)v\cdot\tilde\Delta v &\le \theta_3\|\tilde\Delta v\|_{L_2}^2 + C_{\theta_3}\|\nabla v\|_{L_2}^6,\\
		\int_D f\cdot\tilde\Delta v &\le \theta_4\|\tilde\Delta v\|_{L_2}^2 + C_{\theta_4}\|f\|_{L_2}^2.
	\end{aligned}
	\]
	For the damping term on the left, we integrate by parts on $D_1$ (using $\operatorname{div}v=0$ and homogeneous boundary conditions) to show
	\[
	-\frac{1}{\varepsilon\|v\|_{L_2(D_1)}^\beta}\int_{D_1} v\cdot\tilde\Delta v\,dx
	= \frac{\|\nabla v\|_{L_2(D_1)}^2}{\varepsilon\|v\|_{L_2(D_1)}^\beta}.
	\]
	By the Poincare inequality on $D_1$ there is $C_P>0$ with $\|v\|_{L_2(D_1)}\le C_P\|\nabla v\|_{L_2(D_1)}$, hence
	\[
	\frac{\|\nabla v\|_{L_2(D_1)}^2}{\varepsilon\|v\|_{L_2(D_1)}^\beta}
	\ge \frac{1}{\varepsilon C_P^\beta}\|\nabla v\|_{L_2(D_1)}^{2-\beta}.
	\]
	Collecting estimates and ensuring $\theta_2+\theta_3+\theta_4<1$ yields
	\begin{equation}\label{St 19}
		{\mu}\|\tilde\Delta v\|_{L_2(D)}^2 + \frac{1}{\varepsilon}\|\nabla v\|_{L_2(D_1)}^{2-\beta}
		\le C\big(\|v_t\|_{L_2(D)}^2 + \|\nabla v\|_{L_2(D)}^6 + \|f\|_{L_2(D)}^2\big),
	\end{equation}
	where $C$ depends on the choice of the small parameters and on $C_{\rm reg}$, $C_P$, etc.
	
Adding inequality \eqref{St 19} to \eqref{St 17} and dividing by $\theta_1>0$, we obtain  
\[
\| v^{\varepsilon}_{t} \|^{2}_{L_2(D)} + \frac{1}{\varepsilon} \|\nabla v^{\varepsilon}\|^{2-\beta}_{L_2(D_1)} + \frac{d}{dt}\Big( {\mu}\|\nabla v^{\varepsilon} \|^{2}_{L_2(D)} + \tfrac{1}{\varepsilon}\|v^{\varepsilon}\|^{2-\beta}_{L_2 (D_1)}\Big) 
\leq C\big( \|\nabla v^{\varepsilon}\|^{6}_{L_2 (D)} + \|f\|^{2}_{L_2 (\Omega)}\big).
\]
Setting  
\[
y(t)= {\mu}\|\nabla v^{\varepsilon} \|^{2}_{L_2(D)} + \tfrac{1}{\varepsilon}\|v^{\varepsilon}\|^{2-\beta}_{L_2 (D_1)},
\]
we get
\[
\frac{dy}{dt}\leq C \left(y^3(t)+\|f\|^{2}_{L_2 (\Omega)} \right),
\]
{with $C$ depending also on $\mu$.}
Integrating over $(0,t)$ gives  
\[
y(t)-y(0)\leq C\int_0^t y^3(\tau)\,d\tau + C\|f\|^{2}_{L_2 (Q_T)}.
\]
Let  
\[
z(t)=\int_0^t y^3(\tau)\,d\tau, \qquad B=C\|f\|^{2}_{L_2(Q_T)}.
\]
Then $y(t)\le y(0)+Cz(t)+B$ and hence  
\[
z'(t)=y(t)^3 \le (y(0)+Cz(t)+B)^3.
\]
Defining $R(t)=y(0)+Cz(t)+B$, we obtain
\[
R'(t)=Cz'(t)\le C R^3(t), \qquad R(0)=y(0)+B.
\]
By comparison with the solution of $S'(t)=C S^3(t)$, $S(0)=R(0)$, we find
\[
R(t)\le \left[-2Ct+(y(0)+B)^{-2}\right]^{-1/2}.
\]
Thus, choosing $T_0>0$ such that $-2CT_0+(y(0)+B)^{-2}>0$, we deduce that $R(t)$, and therefore $y(t)$, remain bounded on $[0,T_0]$. This yields the estimate
\begin{equation} \label{St 20}
	\|\nabla v^{\varepsilon} \|^2_{L_\infty(0,T_0;L_2(D))} + \tfrac{1}{\varepsilon} \| v^{\varepsilon} \|^{2-\beta}_{L_\infty(0,T_0;L_2(D_1))} + \| v^{\varepsilon}_{t} \|^2_{L_2(0, T_0; L_2(D))} + \tfrac{1}{\varepsilon} \int_0^{T_0} \|\nabla v^{\varepsilon} \|^{2-\beta}_{L_2(D_1)}\,dt \leq C.
\end{equation}
Finally, from \eqref{St 19} it also follows that
\begin{equation} \label{St 21}
	\|\tilde{\Delta} v^{\varepsilon} \|^2_{L_2(0,T_0;L_2(D))} \leq C
\end{equation}
for some $T_0>0$
depending on the solution of the homogeneous equation \eqref{St 1}--\eqref{St 3}, where the value \( T_0 \) is determined by the given data of the original problem \eqref{St 1}--\eqref{St 3}. 

Thus, we obtain estimate \eqref{St 15}. The  further proof of the local existence of a strong solution to the auxiliary problem \eqref{St 8}--\eqref{St 10} % and its convergence to the strong solution of the original problem \eqref{St 1}--\eqref{St 3} as \(\varepsilon \to 0\) 
is carried out based on estimates  \eqref{St 15} similarly to \cite[{section~III.3}]{antontsev1989boundary} and does not present any particular difficulty.
\medskip

\textbf{Proof of uniqueness of the strong solution.} Assume there exist two strong solutions $(v_1^\varepsilon,P_1^\varepsilon)$ and $(v_2^\varepsilon,P_2^\varepsilon)$ and set
$\omega=v_1^\varepsilon-v_2^\varepsilon$, $\pi=P_1^\varepsilon-P_2^\varepsilon$. Then from \eqref{St 8}--\eqref{St 10} we get
\begin{equation}\label{St22corr}
	\omega_t+(\omega\cdot\nabla)v_1^\varepsilon+(v_2^\varepsilon\cdot\nabla)\omega=\mu\Delta\omega-\nabla\pi
	-\frac{\xi(x)}{\varepsilon}\Big(\frac{v_1^\varepsilon}{\|v_1^\varepsilon\|_{L_2(D_1)}^\beta}-\frac{v_2^\varepsilon}{\|v_2^\varepsilon\|_{L_2(D_1)}^\beta}\Big),
\end{equation}
with $\operatorname{div}\omega=0$, $\omega|_{t=0}=0$, $\omega|_{S_1}=0$. Multiply \eqref{St22corr} by $\omega$ and integrate over $D$ to obtain
\[
\frac12\frac{d}{dt}\|\omega\|_{L_2(D)}^2+\mu\|\nabla\omega\|_{L_2(D)}^2
+\frac{1}{\varepsilon}\int_{D_1}\Big(\frac{v_1^\varepsilon}{\|v_1^\varepsilon\|^\beta}-\frac{v_2^\varepsilon}{\|v_2^\varepsilon\|^\beta}\Big)\!\cdot\omega\,dx
=-\int_D(\omega\cdot\nabla)v_1^\varepsilon\cdot\omega\,dx.
\]
The convection term is estimated by the Gagliardo--Nirenberg and Young inequalities:
\[
\Big|\int_D(\omega\cdot\nabla)v_1^\varepsilon\cdot\omega\,dx\Big|
\le C\|\nabla v_1^\varepsilon\|_{L_2}\|\omega\|_{L_2}^{1/2}\|\nabla\omega\|_{L_2}^{3/2}
\le\frac\mu2\|\nabla\omega\|_{L_2}^2 + C\|\omega\|_{L_2}^2.
\]
For the damping term set $a=\|v_1^\varepsilon\|_{L_2(D_1)}$, $b=\|v_2^\varepsilon\|_{L_2(D_1)}$. Using Cauchy--Schwarz,
$\int_{D_1}v_1^\varepsilon\cdot v_2^\varepsilon\,dx\le ab$, we obtain the monotonicity estimate
\[
\frac{1}{\varepsilon}\int_{D_1}\Big(\frac{v_1^\varepsilon}{a^\beta}-\frac{v_2^\varepsilon}{b^\beta}\Big)\cdot\omega\,dx
\ge\frac{1}{\varepsilon}(a-b)(a^{1-\beta}-b^{1-\beta})\ge0.
\]
Combining the estimates yields
\[
\frac{d}{dt}\|\omega\|_{L_2(D)}^2 \le C\|\omega\|_{L_2(D)}^2,
\]
and since $\omega(0)=0$ the Gronwall lemma gives $\omega\equiv0$ on $[0,T]$. Finally, from \eqref{St22corr} and $\omega\equiv0$ we infer $\nabla\pi=0$ (indeed the right-hand side of \eqref{St22corr} vanishes in $W^{-1}_2$), so the pressure difference is constant. Hence the strong solution is unique.
   Thus, the theorem is proved.
\end{proof}

\section{Convergence of the strong solutions} \label{S5}

\begin{theorem}{Assume  the strong solution $v$ of  \eqref{St 1}--\eqref{St 3} belongs to $L_\infty(0,T;W^2_2(\Omega))$.} Then the strong solution $v^{\varepsilon}$ of the initial boundary value problem \eqref{St 8}--\eqref{St 10} as $\varepsilon \rightarrow 0$ converges to the strong solution of the limiting problem with the following estimate:
\begin{equation} \label{St 26}
    \left\| v^{\varepsilon} - v \right\|_{L_\infty(0,T;L_2(\Omega))}^2 + \left\| v^{\varepsilon} - v \right\|_{L_2(0,T;V(\Omega))}^2 \leq C \varepsilon^{A(k, \beta)},  
\end{equation} 
where $A(k, \beta) = \frac{{16-6k}}{16+12k-16\beta-3\beta k},\, 0 \leq \beta < 1, \, k\in(0,8/3)$, and a constant $C$ may depend on $k,\beta$, {on $\mu$} and various norms of $f$ and $v_0$, but not on $\varepsilon$.
\end{theorem}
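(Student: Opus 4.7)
The plan is to set up and control the error equation on $D$. Extend the strong solution $v$ of \eqref{St 1}--\eqref{St 3} by zero to $D$; since $v|_S=0$, the extension $\tilde v$ belongs to $L_\infty(0,T;V(D))$. Using that $\tilde v\equiv 0$ on $D_1$, integration by parts of the viscous and pressure terms of the original Navier--Stokes system against an arbitrary $\phi\in V(D)$ produces an extra surface integral on $S$ of the Stokes flux $\mu\partial_n v-Pn$, because $\phi$ need not vanish there. Subtracting the resulting identity from the weak formulation of \eqref{St 8}--\eqref{St 10}, choosing $\phi=w:=v^\varepsilon-\tilde v$, and using that $((v^\varepsilon\cdot\nabla)w,w)_D=0$ for the solenoidal $v^\varepsilon$ with $v^\varepsilon|_{S_1}=0$, and that $w|_{D_1}=v^\varepsilon$, I obtain the energy identity
\[
\tfrac12\tfrac{d}{dt}\|w\|_{L_2(D)}^2+\mu\|\nabla w\|_{L_2(D)}^2+\tfrac{1}{\varepsilon}\|v^\varepsilon\|_{L_2(D_1)}^{2-\beta}=-\int_\Omega(w\cdot\nabla)v\cdot w\,dx+\int_S(\mu\partial_n v-Pn)\cdot w\,dS.
\]

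The convection term is handled by standard Sobolev/Gagliardo--Nirenberg interpolation combined with $v\in L_\infty(0,T;W^2_2(\Omega))$, absorbing a small fraction of $\|\nabla w\|_{L_2(\Omega)}^2$ into the dissipation and leaving a $C\|w\|_{L_2(\Omega)}^2$ contribution. The hard part is the boundary integral on $S$, which is the sole source of the $\varepsilon$-dependence of the final estimate. Since $w|_{S_1}=0$, a direct consequence of Lemma~\ref{Lem4} and the Poincar\'e inequality on $D_1$ is
\[
\|w\|_{L_q(S)}\le C\|\nabla w\|_{L_2(D_1)}^{\alpha}\|v^\varepsilon\|_{L_2(D_1)}^{1-\alpha},\qquad \alpha\in(0,1),\ q=\tfrac{4}{3-2\alpha}.
\]
The regularity $v\in L_\infty(0,T;W^2_2(\Omega))$ yields $\mu\partial_n v-Pn\in L_\infty(0,T;H^{1/2}(S))\hookrightarrow L_\infty(0,T;L_{q'}(S))$ for every $q'<4$, which covers the full range of admissible $q$. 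H\"older together with a first Young inequality to absorb $\|\nabla w\|_{L_2(D_1)}^2$ into the dissipation then bounds the boundary integral by $C\|v^\varepsilon\|_{L_2(D_1)}^{\gamma}$ with $\gamma=\tfrac{2(1-\alpha)}{2-\alpha}$, and one checks $\gamma<2-\beta$ for all $\beta\in[0,1)$ and $\alpha\in(0,1)$.

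A second Young inequality splits this residual as $C\|v^\varepsilon\|_{L_2(D_1)}^{\gamma}\le\tfrac{1}{4\varepsilon}\|v^\varepsilon\|_{L_2(D_1)}^{2-\beta}+C\,\varepsilon^{\gamma/(2-\beta-\gamma)}$, so the first term is absorbed into the penalty on the left-hand side. Invoking Gronwall with $w(0)=0$ then yields
\[
\|w\|_{L_\infty(0,T;L_2(D))}^2+\|\nabla w\|_{L_2(0,T;L_2(D))}^2\le C\,\varepsilon^{\rho(\alpha,\beta)},\qquad \rho(\alpha,\beta)=\frac{2(1-\alpha)}{2(1-\beta)+\alpha\beta}.
\]
Restricting the norms from $D$ to $\Omega$ gives the left-hand side of \eqref{St 26}. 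The rate $A(k,\beta)$ is then recovered by the reparameterization $\alpha=\tfrac{9k}{8+6k}$, which bijectively maps $k\in(0,8/3)$ to $\alpha\in(0,1)$ and, by direct algebra, rewrites $\rho(\alpha,\beta)$ as $A(k,\beta)$.

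The principal obstacle is the treatment of the surface term on $S$, which sets the rate: it requires the precise trace--interpolation inequality of Lemma~\ref{Lem4} and the correct balance, via two Young inequalities, between the dissipation, the penalty, and the leftover power of $\varepsilon$. The free parameter $k$ records the tradeoff in that interpolation and may be optimized once $\beta$ is fixed; in particular, for $\beta=0$ the choice $k=2/3$ reproduces the classical rate $O(\varepsilon^{1/2})$ of \eqref{St 7}, while smaller $k$ yields faster rates.
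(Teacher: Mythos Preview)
Your proof is correct and takes essentially the same approach as the paper: derive the error energy identity on $D$ with the surface term on $S$, handle the convection term by Gagliardo--Nirenberg, control the surface term via the trace interpolation of Lemma~\ref{Lem4} on $D_1$, balance with two Young inequalities against the dissipation and the penalty, and finish with Gronwall. The only cosmetic differences are that you parameterize by the interpolation exponent $\alpha$ and convert to $k$ at the end (the paper works with $k$ throughout), and you retain the pressure trace $Pn$ in the surface flux while the paper keeps only $\mu\partial_n v$; neither change affects the argument or the final rate.
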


\begin{remark}\label{Rem2}\rm 
{The limiting case of $k\to 0$ gives $A(0, \beta) = \frac{1}{1-\beta}$. 
Thus for $\beta=0$ we recover the `classical' estimate  \eqref{St 7}, however using simple energy arguments rather than asymptotic analysis of the boundary layer as in \cite{carbou2003boundary}.   For  
 $\beta \rightarrow 1,\ k\rightarrow 0$ we have  $A(k, \beta) \rightarrow \infty$, while the constant $C$ in \eqref{St 26} blows up for $\beta \rightarrow 1,\ k\rightarrow 0$.   }
%Furthermore, for $\beta=0$ we recover the `classical' estimate  \eqref{St 7} for $k=2/3$, while letting $ k\to 0$ improves it to  
%$\left\| v^{\varepsilon} - v \right\|_{L_2(0,T;V(\Omega))}\le C({s}) \varepsilon^{1-{s}}$ with any ${s}>0$ {and $C(s)$ blowing up for $s\to0$.} 
\end{remark}

\begin{proof}
	Let $\psi\in V(D)$ be a solenoidal test function with $\psi|_{S_1}=0$. Multiplying \eqref{St 1} by $\psi$ and integrating over $\Omega$, we obtain
	\begin{equation} \label{St 27-new}
		\left( \frac{\partial v}{\partial t}, \psi \right)_{L_2(\Omega)} 
		+ \mu \left( \nabla v, \nabla \psi \right)_{L_2(\Omega)}
		- \mu \int_{S} \frac{\partial v}{\partial n} \, \psi \, dS
		- \left( (v \cdot \nabla)\psi, v \right)_{L_2(\Omega)} 
		= (f, \psi)_{L_2(\Omega)},
	\end{equation}
	where $v,f$ are extended by zero into $D_1$.
	
	Similarly, multiplying \eqref{St 8} by $\psi$ and integrating over $D$ gives
	\begin{equation} \label{St 28-new}
		\left( \frac{\partial v^{\varepsilon}}{\partial t}, \psi \right)_{L_2(D)} 
		+ \mu \left( \nabla v^{\varepsilon}, \nabla \psi \right)_{L_2(D)}
		- \left( (v^{\varepsilon} \cdot \nabla)\psi, v^{\varepsilon} \right)_{L_2(D)}
		+ \frac{1}{\varepsilon} \int_{D_1} \frac{v^{\varepsilon}\cdot \psi}{\| v^{\varepsilon} \|^\beta_{L_2(D_1)}} \, dx
		= (f^{\varepsilon}, \psi)_{L_2(D)}.
	\end{equation}
		Subtracting \eqref{St 27-new} from \eqref{St 28-new}, and denoting $\omega = v^{\varepsilon} - v$, we find
	\begin{multline*}
	\left( \frac{\partial \omega}{\partial t}, \psi \right)_{L_2(D)} 
	+ \mu(\nabla \omega, \nabla \psi)_{L_2(D)}
	+ \mu \int_{S} \frac{\partial v}{\partial n}\,\psi\, dS \\
	+ \left( (\omega \cdot \nabla)\psi, v \right)_{L_2(D)}
	+ \left( (v^{\varepsilon} \cdot \nabla)\psi, \omega \right)_{L_2(D)}
	+ \frac{1}{\varepsilon} \int_{D_1} \frac{v^{\varepsilon} \cdot \psi}{\|v^{\varepsilon}\|_{L_2(D_1)}^{\beta}} dx = 0.
	\end{multline*}
	
	Choosing $\psi = \omega$ and noting that $\omega=v^{\varepsilon}$ in $D_1$, we obtain the energy identity
	\[
	\frac{1}{2} \frac{d}{dt} \|\omega\|_{L_2(D)}^2 
	+ \mu \|\nabla \omega\|_{L_2(D)}^2
	+ \frac{1}{\varepsilon} \|\omega\|^{2-\beta}_{L_2(D_1)}
	+ \mu \int_S \frac{\partial v}{\partial n}\,\omega \, dS
	- \big( (\omega \cdot \nabla)\omega, v \big)_{L_2(D)} = 0.
	\]
	Hence,
	\begin{equation} \label{St 29-new}
		\frac{1}{2} \frac{d}{dt} \|\omega\|_{L_2(D)}^2 
		+ \mu \|\nabla \omega\|_{L_2(D)}^2
		+ \frac{1}{\varepsilon} \|\omega\|^{2-\beta}_{L_2(D_1)}
		\leq \mu \left| \int_S \frac{\partial v}{\partial n}\,\omega \, dS \right|
		+ \left| \big( (\omega \cdot \nabla)\omega, v \big)_{L_2(D)} \right|.
	\end{equation}
	
	\medskip
	First we estimate  the boundary term. 
	By Hölder’s inequality, {the trace inequality and the regularity assumption for $v$, we get for any $k\ge0$}:
	\[
	\mu \left| \int_S \frac{\partial v}{\partial n}\,\omega \, dS \right|
	\leq \mu \left\| \frac{\partial v}{\partial n} \right\|_{L_{{\frac{4+3k}{1+3k}}}(S)} \|\omega\|_{L_{{\frac43+k}}(S)}{
	\le C \|v\|_{W^2_2(\Omega)} \|\omega\|_{L_{\frac43+k}(S)} \le 	C \|\omega\|_{L_{\frac43+k}(S)}}.
	\]
	{Applying the interpolation estimate from Lemma~\ref{Lem4} and restricting to  $k\in(0,\frac83)$} yields  
	\[
	\begin{split}
	{ \|\omega\|_{L_{\frac{4}{3}+k}(S)}}& 
	\leq C_2 \|\nabla \omega\|^{\frac{9k}{8+6k}}_{L_2(D_1)} \|\omega\|^{\frac{8-3k}{8+6k}}_{L_2(D_1)}\\
	&= C_2 \left(\left\| \nabla\omega \right\|_{L_2(D_1)} \left\| \omega \right\|^{(2-\beta)/2}_{L_2(D_1)}\right)^{\frac{9k}{8+6k}} \| \omega \|^{\frac{16-24k+9\beta k}{2(8+6k)}}_{L_2 (D_1)}  \\
	&\leq C_2 \left( \frac{\sqrt{\varepsilon}}{C_3} \left\| \nabla\omega \right\|^2_{L_2(D_1)} + \frac{C_3}{4\sqrt{\varepsilon}} \left\| \omega \right\|^{2-\beta}_{L_2(D_1)} \right)^{\frac{9k}{8+6k}} \left\| \omega \right\|^{\frac{16-24k+9\beta k}{2(8+6k)}}_{L_2(D_1)}\quad{\text{\small $\forall\, C_3>0$}}\\
	&= \left( \left\| \nabla\omega \right\|^2_{L_2(D_1)} + \frac{C_4}{\varepsilon} \left\| \omega \right\|^{2-\beta}_{L_2(D_1)} \right)^{\frac{9k}{8+6k}} \varepsilon^{\frac{9k}{16+12k}} \left\| \omega \right\|^{\frac{16-24k+9\beta k}{16+12k}}_{L_2(D_1)} \quad{\text{\small we let $C_3=C_2^{-\frac{8+6k}{9k}}$}} \\
	&\leq \delta \left(  \| \nabla\omega \|^2_{L_2(D_1)} + \frac{C_4}{\varepsilon} \left\| \omega \right\|^{2-\beta}_{L_2(D_1)} \right) + C_{\delta} \left(\varepsilon^{\frac{9k}{16+12k}} \left\| \omega \right\|^{\frac{16-24k+9\beta k}{16+12k}}_{L_2(D_1)}\right)^{\frac{8+6k}{8-3k}}\\
	&=\delta \left(  \| \nabla\omega \|^2_{L_2(D_1)} + \frac{C_4}{\varepsilon} \left\| \omega \right\|^{2-\beta}_{L_2(D_1)} \right) + C_{\delta} \left(\varepsilon^{\frac{9k}{2(8-3k)}} \left\| \omega \right\|^{\frac{16-24k+9\beta k}{2(8-3k)}}_{L_2(D_1)}\right),
	\end{split}
	\]
	with arbitrary $\delta>0$.
	
	\medskip
	Next we estimate of the nonlinear term.
	Using Sobolev embedding {and the regularity assumption for $v$, we have  $\|v\|_{L_6(D)} \le C\|\nabla v\|_{L_2(D)}\le C$. Thanks to this} and the Gagliardo–Nirenberg inequality $\|\omega\|_{L_3(D)} \le \|\omega\|_{L_2(D)}^{1/2}\|\nabla \omega\|_{L_2(D)}^{1/2}$, we obtain
	\[
	\big| ((\omega\cdot\nabla)\omega, v)_{L_2(D)} \big|
	\le C \|\nabla \omega\|_{L_2(D)}^{3/2}\|\omega\|_{L_2(D)}^{1/2}
	\le \frac{\mu}{2}\|\nabla \omega\|_{L_2(D)}^2 + C \|\omega\|_{L_2(D)}^2.
	\]
	
	\medskip
	We now  collect the  estimates and balance exponents.  
	Substituting the above into \eqref{St 29-new}, {and choosing $\delta>0$ small enough but independent of $\varepsilon$}, we obtain
	\begin{equation} \label{St 30-new}
		\frac{1}{2}\frac{d}{dt}\|\omega\|_{L_2(D)}^2
		+ C_5\left( \|\nabla \omega\|_{L_2(D)}^2 + \frac{1}{\varepsilon}\|\omega\|^{2-\beta}_{L_2(D_1)} \right)
		\le C_6 \|\omega\|_{L_2(D)}^2
		+ C_7 \varepsilon^{\frac{9k}{2(8-3k)}} \|\omega\|_{L_2(D_1)}^{\frac{16-24k+9\beta k}{2(8-3k)}}.
	\end{equation}
	Applying Young’s inequality {$ab \le \frac{\delta_3}{p} a^p
	+ \frac{1}{q\,\delta_3^{q/p}} b^q,$} with exponents
	\[
	p=\frac{2(2-\beta)(8-3k)}{16-24k+9\beta k}, 
	\qquad 
	q=\frac{2(2-\beta)(8-3k)}{16+12k-16\beta-3\beta k},
	\]
	and  {$\delta_3=\frac{C_5 p}{2C_6 \varepsilon}$}, we deduce
	\[
	\varepsilon^{\frac{9k}{2(8-3k)}} \|\omega\|_{L_2(D_1)}^{\frac{16-24k+9\beta k}{2(8-3k)}}
	\leq \frac{C_5}{2C_6\varepsilon}\|\omega\|^{2-\beta}_{L_2(D_1)}
	+ C_8 \varepsilon^{A(k,\beta)},
	\]
	with
	\[
	A(k,\beta) = \frac{{16-6k}}{16+12k -16\beta - 3\beta k}.
	\]
Thus, it follows from \eqref{St 30-new} that
	\begin{equation} \label{St 31-new}
		\frac{1}{2}\frac{d}{dt}\|\omega\|^2_{L_2(D)}
		+ C_5\left( \|\nabla \omega\|_{L_2(D)}^2 + \frac{1}{2\varepsilon}\|\omega\|^{2-\beta}_{L_2(D_1)} \right)
		\leq C_6 \|\omega\|_{L_2(D)}^2 + C_8 \varepsilon^{A(k,\beta)}.
	\end{equation}
	
Applying Grönwall’s lemma to \eqref{St 31-new} gives
	\[
	\|\omega(t)\|^2_{L_2(D)} \le C \varepsilon^{A(k,\beta)}, 
	\qquad \forall t \in [0,T].
	\]
Finally, integrating \eqref{St 31-new} over $[0,T]$ yields
	\[
	\|\nabla \omega\|^2_{L_2(0,T;L_2(D))} \le C \varepsilon^{A(k,\beta)}.
	\]
Since $\omega = v^\varepsilon - v$, the desired estimate \eqref{St 26} follows. 
\end{proof}

\section{Unfitted finite element method} \label{S6}

\label{sec.method}

To illustrate the performance of the method, we consider an unfitted finite element method that utilizes a background mesh in the ambient domain $D$ in a way agnostic to the geometry of the physical domain $\Omega$. In our study, we restrict the experiments to the two-dimensional case.  
\medskip

\newcommand{\Tht}{\mathcal{T}_h}
\newcommand{\restr}[2]{{\left.\kern-\nulldelimiterspace#1\right|_{#2}}}
\newcommand{\diver}{\operatorname{div}}	

Let $\Tht$ be a simplicial, shape-regular, and quasi-uniform triangulation of the ambient domain $D$, and let $\widetilde{\Tht}$ denote the mesh obtained from $\Tht$ by the Alfred split (barycentric refinement).  
On $\Tht$, we consider the inf-sup stable~\cite{arnold1992quadratic} Scott--Vogelius finite element pair
\begin{align*}
	V_h &= \{v\in H^1_0(D) \mid \restr{v}{K}\in\mathbb{P}^k(K)~\forall K\in\Tht  \}^d,\\
	Q_h &= \{q\in L^2(D) \mid \restr{q}{K}\in\mathbb{P}^{k-1}(K)~\forall K\in\Tht  \},
\end{align*}
with $k=2$. Since $\operatorname{div}(V_h)\subset Q_h$, the recovered solution is pointwise divergence-free. Therefore, we obtain a stable fictitious domain method that is strongly mass-conserving.  

For the discretization, we define the following forms:
\begin{align*}
	a(u, v) &= \mu \int_{D}\nabla u :\nabla v \,dx,\\
	b(p, v) &=  -\int_{D} p\, \diver v\,dx,\\
	c(u, v, w) &= \int_{D} (u\cdot\nabla v)\cdot w\,dx,\\
	s(u, v, w) &= \int_{D} \frac{\xi(x)v\cdot w}{\varepsilon\big(\|u\|_{L_2(D_1)}^{\beta}+{\delta_{\rm reg}}\big)} \,dx,
\end{align*}
where {$\delta_{\rm reg}>0$} is a small regularization parameter, {which is required by the method to start from the zero initial state}.  {In our experiments we set $\delta_{\rm reg}=10^{-9}$ and we found that the method is not sensitive to the  choice of $\delta_{\rm reg}$ once it is sufficiently small.}

We consider a uniform time grid $t_n = nT/N$, $n = 0,\dots,N$, where $N$ is the total number of time steps. We adopt the standard notation $f^n$ for the approximation of a quantity $f(t)$ at time $t=t_n$.  
The fully discrete variational formulation with a BDF2 semi-implicit time discretization is given by:  
Find $(v_h^n, p_h^n)\in V_h\times Q_h$, $n=1,\dots,N$, such that
\begin{multline}\label{eqn.discrete.method:TH}
	\int_{D}\frac{3 u^n_h - 4u_h^{n-1} + u_h^{n-2}}{2 \Delta t}  \phi_h \,dx 
	+ a( v^n_h, \phi_h) 
	+ c_h(2v^{n-1}_h - v^{n-2}_h,  v^n_h,  \phi_h)\\ 
	+ b_h(p_h^n,  \phi_h) + b_h(q_h, v^n_h)
	+ s(2v^{n-1}_h - v^{n-2}_h, v^n_h, \phi_h)  = \int_{D}f^\varepsilon  \phi_h \,dx
\end{multline}
for all $(\phi_h, q_h)\in V_h\times Q_h$, with the initial condition $u^0_h = v^\varepsilon(0)$.  
At each time step, a \textit{linear} problem must be solved.

A non-standard implementation issue arises when integrating the bilinear form $s$ over elements $T\in\Tht$ such that $T\cap S\neq \emptyset$, because $\xi$ is discontinuous in such elements. We address this by representing $\Omega$ as the sublevel set of a smooth level-set function $\phi$, i.e., $\xi = \frac12(\operatorname{sign}(\phi)+1)$, and by applying isoparametric unfitted finite elements~\cite{Lehrenfeld2016}.  

We perform experiments with $\Omega$ being a circle of radius $1/\sqrt{2}$ and $D=[-1,1]^2$. The exact solution is given by the velocity
\[
\mathbf{u} =
\begin{pmatrix}
	2 \pi y \, \sin(\pi t)\, \cos(\pi r^{2}) \\[6pt]
	- 2 \pi x \, \sin(\pi t)\, \cos(\pi r^{2})
\end{pmatrix},
\qquad r^{2} = x^{2} + y^{2},
\]
and the pressure
\[
p = \sin(\pi r^{2}) - \frac{2}{\pi},
\]
for $t\in[0,1]$.  

\begin{figure} 
	\includegraphics[width=0.45\textwidth]{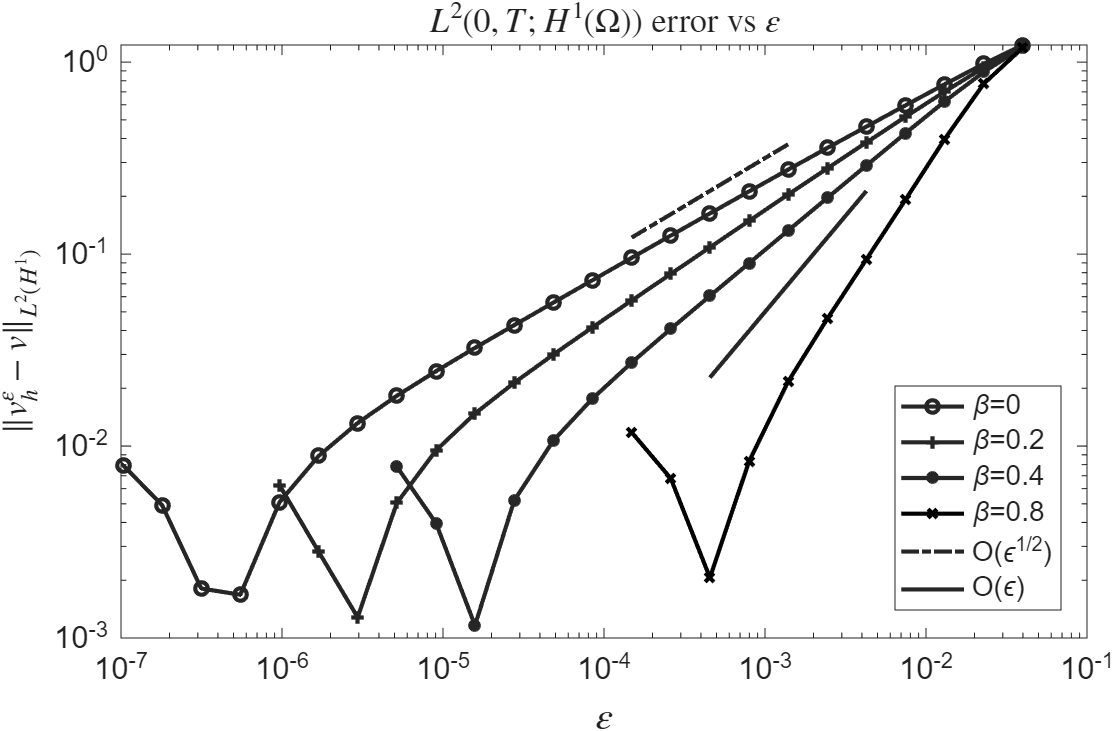}	\hfill \includegraphics[width=0.45\textwidth]{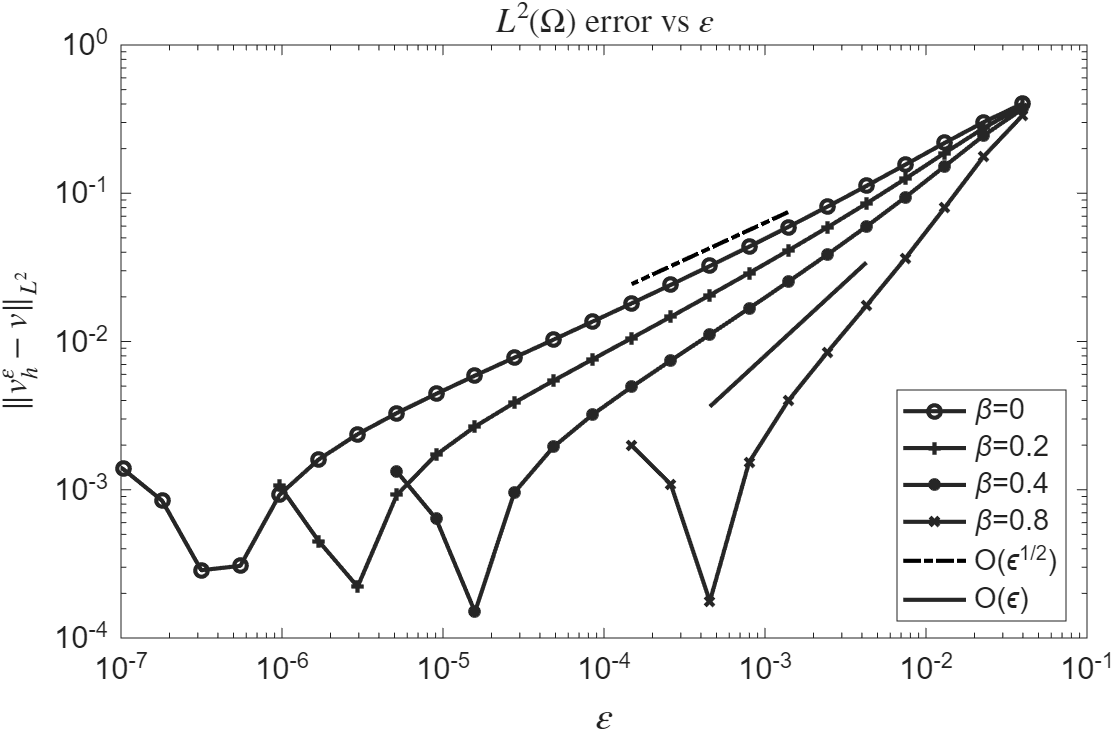}	
	\caption{The $L^2(\Omega)$-norm of the error at $t=T$ and the $L^2(0,T;H^1(\Omega))$-norm of the error versus $\varepsilon$ for various values of $\beta$. \label{Fig:1}}	
\end{figure}

For fixed discretization parameters ($h=0.0125$ and $\Delta t=0.025$), we study the convergence of the method with respect to $\varepsilon$ for several values of the exponent $\beta$. Recall that $\beta=0$ corresponds to the classical Brinkman penalization fictitious domain method. The experimental results are shown in Figure~\ref{Fig:1}, which illustrates the dependence of the $L^2(0,T;H^1(\Omega))$- and $L^2(\Omega)$-errors on $\varepsilon$.  
We observe that the achievable accuracy does not vary significantly for different values of $\beta$, with $\beta=0.4$ yielding the most accurate results among the exponents tested. We also note that for large $\beta$, the error decreases more rapidly with $\varepsilon$, in agreement with the theoretical analysis.  { For $\beta=0$ the $O(\varepsilon^{1/2})$ rate is recovered. 
For all $\beta$ tested there is a clear asymptotic convergence regime before the error drops faster  and then increases again.
This increase in error is due to the poor conditioning of the resulting algebraic systems, which is estimated to be around $10^{15}$ when the error rebounds.}
 Finally, we note that, due to the choice of finite elements, mass conservation in the physical domain is exact.

\section*{Conclusion}
We presented a simple modification of the well-known Brinkman penalization fictitious domain method for the Navier–Stokes system. For an optimal choice of the critical small penalization parameter, the classical and the modified methods achieve comparable accuracy. {The modification allows for a larger penalization parameter. Finding a nearly optimal parameter in the latter case may therefore require fewer trial computations.} The method is easy to implement and permits the direct use of strongly divergence-free finite elements, which more elaborate unfitted methods such as CutFEM find difficult to accommodate. On the theoretical side, we established well-posedness results for the fictitious domain problem and derived a new convergence estimate.

\subsection*{Acknowledgments} This research is funded by the Science Committee of the Ministry of Science and Higher Education of the Republic of Kazakhstan (grant No. AP23490027 – “Development of application software packages for the numerical solution of Navier–Stokes equations in complex domains”). The author M.O. was supported in part by the U.S. National Science Foundation under award DMS-2408978.

\small
\bibliographystyle{siam}
\bibliography{literature_updated}{}

\end{document}